\newtheorem{theorem}[equation]{Theorem}
\newtheorem{corollary}[equation]{Corollary}
\newtheorem{lemma}[equation]{Lemma}
\newtheorem{proposition}[equation]{Proposition}
\theoremstyle{definition}
\newtheorem{definition}[equation]{Definition}
\theoremstyle{remark}
\newtheorem{remark}[equation]{Remark}
\numberwithin{equation}{section}
\newcommand{\abs}[1]{|#1|}
\def\To{\longrightarrow}
\def\mono{\hookrightarrow}
\def\into{\rightarrowtail}
\def\onto{\twoheadrightarrow}
\def\hom{\operatorname{Hom}}
\def\st{\stackrel} 
\def\cplx{\operatorname{Ch}}
\def\bcplx{\operatorname{bCh}}
\def\tcplx{\operatorname{tCh}}
\def\tot{\operatorname{Tot}}
\def\CE{\operatorname{CE}}
\newcommand{\lradjunction}{\,\,\raisebox{-0.1\height}{$\overrightarrow{\longleftarrow}$}\,\,}
\begin{document}

\title
{Homotopy theory of bicomplexes}%
\author{Fernando Muro}%
\address{Universidad de Sevilla,
Facultad de Matemáticas,
Departamento de Álgebra,
Avda. Reina Mercedes s/n,
41012 Sevilla, Spain}
\email{fmuro@us.es}
\urladdr{http://personal.us.es/fmuro}

\author{Constanze Roitzheim}%
\address{University of Kent, School of Mathematics, Statistics and Actuarial Science, Sibson Building, Canterbury, CT2 7FS, UK}
\email{csrr@kent.ac.uk}
\urladdr{http://www.kent.ac.uk/smsas/personal/csrr}

\thanks{The first author was partially supported by the Spanish Ministry of Economy under the grant MTM2016-76453-C2-1-P (AEI/FEDER, UE)}
\subjclass
{}
\keywords{}

\begin{abstract}
We define two model structures on the category of bicomplexes concentrated in the right half plane. The first model structure has weak equivalences detected by the totalisation functor. The second model structure's weak equivalences are detected by the $E^2$-term of the spectral sequence associated to the filtration of the total complex by the horizontal degree. We then extend this result to twisted complexes.
\end{abstract}

\maketitle

\tableofcontents


\section*{Introduction}

The notion of chain complex is central to homological algebra, as they arise e.g. as resolutions of modules. Bicomplexes, in turn, arise as resolutions of chain complexes. These resolutions, first defined by Cartan and Eilenberg \cite{cartan_homological_1956}, are concentrated in a half-plane. They are used to compute derived functors between derived categories \cite{Gelfand_Manin_2003} in conjuction with the totalisation functor from bicomplexes to complexes. 

\bigskip
Homotopy theory of chain complexes is a well-known concept. Model categories provide a commonly used language to construct resolutions in the presence of a notion of equivalence weaker than isomorphisms. One such example is homology isomorphisms of chain complexes, also known as quasi-isomorphisms. A standard model structure on the category of chain complexes of modules over a ring $k$ is the \emph{projective} model structure, where weak equivalences are quasi-isomorphisms and cofibrant objects are also degreewise projective. 

\bigskip
The first goal of this paper is to provide useful model structures on the category of \emph{bicomplexes} $X_{*,*}$ concentrated in the right half plane, i.e.~$X_{p,q}=0$ for $p<0$. In the first one, the \emph{total model structure}, the weak equivalences are exactly those morphisms whose totalisation induces an isomorphism in homology. In the second model structure, the \emph{Cartan--Eilenberg model structure}, cofibrant resolutions are Cartan--Eilenberg resolutions. Weak equivalences in this second model structure are the so-called ``$E^2$-equivalences''. Bicomplexes have horizontal and vertical differentials,
\[d_h\colon X_{p,q}\longrightarrow X_{p-1,q},\qquad d_v\colon X_{p,q}\longrightarrow X_{p,q-1}.\]
These differentials anti-commute, i.e.
$$d_h d_v + d_v d_h = 0.$$ This condition implies that the horizontal differential $d_h$ induces a differential on the vertical homology $H^v_*(X)$ of a bicomplex $X$, so one can consider
$$ H^h_*(H^v_*(X)).$$ This is exactly the $E^2$-term of a spectral sequence that converges strongly to the homology of the total complex $\tot(X)$ of the bicomplex $X$. Therefore, the ($H^h_* \circ H^v_*$)-isomorphisms are called $E^2$-equivalences, and they are special cases of $(H_* \circ \tot)$-isomorphisms.

\bigskip
Both model structures will enjoy useful properties such as being combinatorial, proper, and monoidal. Furthermore, we will see that the first one is an \emph{abelian} model structure, i.e.~fibrations are surjections with fibrant kernel and cofibrations are injections with cofibrant cokernel. Cofibrant objects are also degreewise projective in both cases, so we can really see our model structures as generalisations of the projective model structure on chain complexes for different choices of weak equivalences. The total model structure, in addition, is Quillen equivalent to the model category of chain complexes.

\bigskip
We will then generalise the total model structure on bicomplexes to the category of \emph{twisted complexes}. While a bicomplex is a bigraded $k$-module with two differentials, a twisted complex is equipped with maps $$d_i: X_{p,q} \longrightarrow X_{p-i, q+i-1}, \qquad i \ge 0,$$ satisfying $$\sum\limits_{i+j=n} d_i d_j=0, \qquad n \ge 0.$$ Naturally, making all the necessary definitions and calculations to obtain the total model structure on this category is a lot more involved than in the case of just two differentials. 

\bigskip
Another motivation for these model structures stems from the study of $A_\infty$-algebras, or ``homotopy associative'' algebras. Among other things, $A_\infty$-structures on the homology of a differential graded algebra allow us to see how many differential graded algebras realise this homology. However, this only works over a ground field \cite{Kadeishvili_1980}, or if all modules in question are projective. To circumnavigate this rather restrictive assumption, one can work in the context of \emph{derived $A_\infty$-algebras} \cite{sagave_dg-algebras_2010}. These are bigraded objects, where the second degree allows to create a projective resolution compatible with any $A_\infty$-structure. Where $A_\infty$-algebras have an underlying chain complex, derived $A_\infty$-algebras have an underlying twisted chain complex concentrated in the right half plane. 
Furthermore, the homological perturbation lemma \cite{Brown_1967} tells us that the vertical homology of every Cartan--Eilenberg resolution can be equipped with the structure of a twisted complex.

Therefore, in order to understand the homotopy theory of derived $A_\infty$-algebras, specifically in an operadic context \cite{livernet-roitzheim-whitehouse, cirici-egas-livernet-whitehouse}, it is necessary to understand the homotopy theory of the underlying twisted complexes.

\bigskip

This paper is organised as follows. In Section \ref{sec:complexes} we recall some basic definitions and results concerning chain complexes and the projective model structure, in particular on how the model structure is constructed using spheres and discs. In Section \ref{sec:bicomplexes} we study the category $\bcplx$ of bicomplexes concentrated in the right half-plane, define the bigraded analogue of spheres and discs and discuss the tensor product. Sections \ref{sec:total} and \ref{sec:ce} give the total and the Cartan--Eilenberg model structures by showing that the generating cofibrations and trivial cofibrations defined using those spheres and discs together with the respective weak equivalences satisfy Smith's recognition principle from \cite[Theorem 2.1.19]{hovey_model_1999}. Finally, we introduce the category of twisted complexes in Section \ref{sec:twisted}, define spheres and discs, and obtain the desired model structure. 

\section{Complexes}\label{sec:complexes}

We briefly recall a couple of facts about the model categories $\cplx$ of unbounded (chain) complexes and $\cplx_{\geq 0}$ of complexes concentrated in non-negative degrees. We use the convention that differentials shift the degree by $-1$. 

Throughout this paper, $k$ denotes a commutative ground ring. Further conditions on $k$ will be imposed when necessary. Tensor product will always be taken over $k$.

As a category, $\cplx$  is locally finitely presentable. Limits and colimits are computed pointwise. It is also a closed symmetric monoidal category with respect to the tensor product. The symmetry constraint uses the Koszul sign convention, and the inner $\hom$ is the graded module
\[\hom_{\cplx}(X,Y)_n=\prod_{m\in\mathbb Z}\hom_k(X_m,Y_{m+n})\]
endowed with the following differential
\[d(f)=df-(-1)^{\abs{f}}fd.\]
Here $\hom_k$ denotes the inner $\hom$ in the category of modules.

The category $\cplx$ is also a combinatorial proper model category. Weak equivalences are quasi-isomorphisms and fibrations are (pointwise) surjections. Let us recall the generating (trivial) cofibrations. For a $k$-module $A$, we define the chain complex $D^n(A)$ to be 
\[\cdots\to 0\to A\st{1}\to A\to 0\to\cdots \]
concentrated in degrees $n$ and $n-1$. Similarily we define $S^{n}(A)$ to just consist of $A$ concentrated in degree $n$. This in fact gives us adjoint functor pairs
\[
ev_n: k\mbox{-mod} \lradjunction \cplx: D^n
\]
and 
\[
Z_n: k\mbox{-mod} \lradjunction \cplx: S^n
\]
where $ev_n$ denotes evaluation at degree $n$ and $Z_n(X)=\ker[d\colon X_{n}\to X_{n-1}]$ denotes the cycles in degree $n$. (Note that when we write adjunctions, the top arrow is always the left adjoint.)

We now define the \emph{$n$-sphere} $S^{n}$ to be $S^{n}(k)$ and the \emph{$n$-disk} $D^n$ to be $D^n(k)$ for short. This is used to construct the \emph{projective model structure} on $\cplx$, which is the model structure we will consider throughout this paper. Define sets $I_{\cplx}$ and $J_{\cplx}$ as
\begin{align*}
	I_{\cplx}&=\{S^{n-1}\mono D^n\}_{n\in\mathbb Z},\\
	J_{\cplx}&=\{0\mono D^n\}_{n\in\mathbb Z}.
\end{align*}
Here $S^{n-1}\mono D^n$ is the identity in degree $n-1$. Furthermore, let $\mathcal{W}$ denote the class of $H_*$-isomorphisms, i.e.~quasi-isomorphisms. 

Recall that for a class of maps $I$, the class $I$-inj is given by all the maps that have the right lifting property with respect to $I$. Furthermore, $I$-cof is the class of all maps that have the left lifting properties with respect to all maps in $I$-inj. The class $I$-cell is given by all transfinite compositions of pushouts of elements of $I$. This class satisfies $I\text{-cell}
\subseteq I\text{-cof}$. 

Then in our case $I_{\cplx}$, $J_{\cplx}$, and $\mathcal{W}$ satisfy the following properties:
\begin{itemize}
\item $\mathcal{W}$ has the two-out-of-three property and is closed under retracts,
\item $J_{\cplx}\mbox{-cell}\subseteq \mathcal{W} \cap I_{\cplx}\mbox{-cof}$,
\item $I_{\cplx}\mbox{-inj}\subseteq \mathcal{W} \cap J_{\cplx}\mbox{-inj}$,
\item either $ \mathcal{W} \cap I_{\cplx}\mbox{-cof} \subseteq J_{\cplx}\mbox{-cof}$ or $\mathcal{W}\cap J_{\cplx}\mbox{-inj} \subseteq I_{\cplx}\mbox{-inj}$ (a posteriori both),
\end{itemize}
plus some set-theoretic conditions. By \cite[Theorem 2.1.19]{hovey_model_1999}, this means that there is a cofibrantly generated model structure with weak equivalences $\mathcal{W}$, generating cofibrations $I_{\cplx}$, and generating trivial cofibrations $J_{\cplx}$. (Trivial) fibrations are the maps in $J_{\cplx}\mbox{-inj}$ (resp.~$I_{\cplx}\mbox{-inj}$), and cofibrations are retracts of maps in $J_{\cplx}\mbox{-cell}$. It can furthermore be shown using the adjunctions defined earlier, that a map is a fibration if and only if it is a degreewise surjection, and that cofibrations are the degreewise monomorphisms with cofibrant cokernel.
The last property means that this model structure is \emph{abelian} in the sense of \cite{hovey_cotorsion_2007}, i.e.~a cofibration is a monomorphism with cofibrant cokernel and a fibration is an epimorphism with fibrant kernel. Cofibrant objects do not have an easy characterization, but they are known to be pointwise projective. 

Furthermore, it is compatible with the monoidal structure in the sense of \cite[Definition 4.2.6]{hovey_model_1999}. The tensor unit $S^0$ is actually cofibrant since it is the cokernel of the generating cofibration $S^{-1}\mono D^0$. The monoid axiom of Schwede and Shipley  \cite[Definition 3.3]{schwede_algebras_2000} is also satisfied.

The full subcategory $\cplx_{\geq 0}\subset\cplx$ of complexes concentrated in non-negative degrees inherits a monoidal model structure with the same tensor product and weak equivalences. The inner $\hom_{\cplx_{\geq 0}}(X,Y)$ is the non-negative truncation of $\hom_{\cplx}(X,Y)$. The former is a subcomplex of the latter, both complexes coincide in (strictly) positive degrees, and 
\[\hom_{\cplx_{\geq 0}}(X,Y)_0=\cplx_{\geq 0}(X,Y)=Z^0(\hom_{\cplx}(X,Y)).\]
The fibrations in $\cplx_{\geq 0}$ are the maps which are surjective on positive degrees but not necessarily in degree $0$. Cofibrations are precisely the maps with pointwise projective cokernel. Sets of generating (trivial) cofibrations are
\begin{align*}
I_{\cplx_{\geq 0}}&=\{0\mono S^0\}\cup\{S^{n-1}\mono D^n\}_{n\geq 1},\\
J_{\cplx_{\geq 0}}&=\{0\mono D^n\}_{n\geq 1}.
\end{align*}
The model structure on $\cplx_{\geq 0}$ is proper, monoidal, with cofibrant monoidal tensor unit $S^0$, and it satisfies the monoid axiom. It is not abelian, though, since fibrations need not be surjective. The inclusion $\cplx_{\geq 0}\subset\cplx$ is a left Quillen functor. Its right adjoint is the non-negative truncation.

We will use the outline of these well-known results as a blueprint for the model structures on bicomplexes, respectively twisted chain complexes, that we are going to construct in the subsequent chapters.

\section{Bicomplexes}\label{sec:bicomplexes}

This section consists of elementary definitions and examples which are relevant for later computations. We consider $(\mathbb N\times\mathbb Z)$-graded bicomplexes made of anticommutative squares.

\begin{definition}\label{bicomplexes}
	A \emph{bicomplex} $X$ is a bigraded module $X=\{X_{p,q}\}_{p,q\in\mathbb Z}$ with $X_{p,q}=0$ for $p<0$, equipped with \emph{horizontal} and \emph{vertical differentials}
	\[d_h\colon X_{p,q}\To X_{p-1,q}, \qquad 
	d_v\colon X_{p,q}\To X_{p,q-1},\]
	respectively, satisfying
	\[d_vd_h+d_hd_v=0.\]
	A morphism of bicomplexes $f\colon X\to Y$ is a family of maps $f_{p,q}\colon X_{p,q}\to Y_{p,q}$ compatible with the horizontal and vertical differentials
	\[d_hf=fd_h,\qquad d_vf=fd_v.\]
	We denote the category of bicomplexes by $\bcplx$.
	
	For any bigraded module $X$, given $x\in X_{p,q}$, we say that $\abs{x}_h=p$ is the \emph{horizontal degree} of $x$, and $\abs{x}_v=q$ is its \emph{vertical degree}. The \emph{bidegree} of $x$ is $(\abs{x}_h,\abs{x}_v)=(p,q)$ and the \emph{total degree} is $\abs{x}=\abs{x}_h+\abs{x}_v=p+q.$
\end{definition}

The category $\bcplx$ is clearly abelian and locally finitely presentable. Limits and colimits are computed pointwise.

\begin{remark}\label{commuting}
	The equation $d_vd_h+d_hd_v$ says that the following squares are anticommutative in a bicomplex
	\[\xymatrix{X_{p-1,q}\ar[d]_{d_v}&X_{p,q}\ar[d]^{d_v}\ar[l]_-{d_h}\\
		X_{p-1,q-1}&X_{p,q-1}\ar[l]^-{d_h}}\]
	Some readers will probably prefer that these squares commute. If we denote by $X'$ the underlying bigraded module of $X$ endowed with the same horizontal differential $d_h'=d_h$ and the new vertical differential $d_v'$ defined by
	\[d_v'(x)=(-1)^{\abs{x}_h}x,\]
	we obtain a bicomplex $X'$ with commuting differentials $d_h'd_v'=d_v'd_h'$. If $\bcplx'$ denotes the category of bicomplexes with commuting differentials, we obtain an isomorphims of categories
	\begin{align*}
		\bcplx&\cong\bcplx'\\X&\mapsto X'.
	\end{align*}
\end{remark}

\begin{definition}
	Let $X$ be a bicomplex. The \emph{vertical cycles} $Z^v(X)$ are the elements in the kernel of the vertical differential of $X$, and the \emph{vertical boundaries} $B^v(X)$ are the elements in the image of $d_h$. The \emph{vertical homology} is \[H^v(X)=\frac{Z^v(X)}{B^v(X)}.\] 
	
	We can regard $Z^v(X)$, $B^v(X)$, and $H^v(X)$ as bicomplexes with trivial vertical differential. Their vertical differentials are induced by that of $X$. We similarly define the \emph{horizontal cycles} $Z^h(X)$, \emph{boundaries} $B^h(X)$, and \emph{homology} $H^h(X)$.
	
\medskip	
Let $A$ be a $k$-module. Then we define $D^{p,q}(A)$, $p> 0$, $q\in\mathbb Z$, to be the bicomplex whose underlying bigraded module is $$D^{p,q}_{p,q}(A)=D^{p,q}_{p-1,q}(A)=D^{p,q}_{p,q-1}(A)=D^{p,q}_{p-1,q-1}(A)=A$$ and zero elsewhere. Its four nontrivial differentials are given by the identity except for $$d_v=-1: D^{p,q}(A)_{p,q} \longrightarrow D^{p,q}(A)_{p,q-1}.$$ 
Furthermore, we define $\partial_hD^{p,q}(A)$ and $\partial_vD^{p,q}(A)$ to be the horizontal resp.~vertical boundaries of $D^{p,q}(A)$. We also define $S^{p,q}(A)$ to be the bicomplex with $A$ in bidegree $(p,q)$ and zero in all other degrees. 

\medskip
The following can be easily verified.

\begin{lemma}\label{representation}
The above definitions give rise to adjunctions
\begin{align*}
ev_{p,q}  &\colon   k\mbox{-mod} \lradjunction \bcplx \colon  D^{p,q} \\
Z^h_{p-1,q} &\colon   k\mbox{-mod} \lradjunction \bcplx \colon  \partial_h D^{p,q} \\
Z^v_{p-1,q} &\colon   k\mbox{-mod} \lradjunction \bcplx \colon  \partial_v D^{p,q} \\
Z^h_{p} \circ Z^v_q  &\colon   k\mbox{-mod} \lradjunction \bcplx \colon  S^{p,q} 
\end{align*}
Here $ev_{p,q}$ denotes evaluation at bidegree $(p,q)$.
\end{lemma}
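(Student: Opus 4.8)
The plan is to prove each of the four assertions by writing down, for fixed $A$ and $X$, an explicit bijection between $\bcplx(F(A),X)$ and $\hom_k(A,G(X))$, where $F$ is one of the ``small'' bicomplexes $D^{p,q}$, $\partial_hD^{p,q}$, $\partial_vD^{p,q}$, $S^{p,q}$ and $G$ is the corresponding functor $ev_{p,q}$, $Z^h_{p-1,q}$, $Z^v_{p-1,q}$, or $Z^h_p\circ Z^v_q$ to $k\mbox{-mod}$; so in each case $F$ is the left adjoint, in parallel with $D^n$ and $S^n$ being the left adjoints in the chain-complex picture recalled in Section~\ref{sec:complexes}. Naturality in $A$ (precomposition) and in $X$ (functoriality of $ev_{p,q}$ and of the cycle functors, which send a morphism of $X$ to its restriction to cycles) will be transparent once the bijection is exhibited, so the whole content is the bijection of underlying sets.

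I would do $D^{p,q}$ first, as a template. A morphism $f\colon D^{p,q}(A)\to X$ is exactly a choice of four $k$-linear maps $A=D^{p,q}(A)_{r,s}\to X_{r,s}$, for $(r,s)$ ranging over $(p,q)$, $(p-1,q)$, $(p,q-1)$, $(p-1,q-1)$, that commute with $d_h$ and $d_v$. Writing $\varphi=f_{p,q}$ for the component at bidegree $(p,q)$, the commutation relations along the nontrivial differentials of $D^{p,q}(A)$ (all equal to the identity except $d_v=-1$ out of $(p,q)$) force
\[f_{p-1,q}=d_h^X\varphi,\qquad f_{p,q-1}=-d_v^X\varphi,\qquad f_{p-1,q-1}=d_v^Xd_h^X\varphi=-d_h^Xd_v^X\varphi,\]
where the last equality expresses the compatibility of the two routes to $(p-1,q-1)$ and uses precisely $d_hd_v+d_vd_h=0$ in $X$; all remaining relations (those touching the zero modules surrounding the square, and $(d^X)^2=0$) hold automatically. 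Hence $f\mapsto\varphi$ is a bijection $\bcplx(D^{p,q}(A),X)\cong\hom_k(A,X_{p,q})=\hom_k(A,ev_{p,q}(X))$. The cases of $\partial_hD^{p,q}$, $\partial_vD^{p,q}$ and $S^{p,q}$ run on the same bookkeeping, once one observes that $\partial_hD^{p,q}(A)$, respectively $\partial_vD^{p,q}(A)$, is the sub-bicomplex of $D^{p,q}(A)$ of horizontal, respectively vertical, boundaries — a ``vertical'', respectively ``horizontal'', disk sitting in the appropriate degree — while $S^{p,q}(A)$ is concentrated in the single bidegree $(p,q)$. In these three cases the surviving free component lives in a bidegree from which one (for $S^{p,q}$, both) of the differentials of $D^{p,q}(A)$ has been removed, so the commutation relation with $X$ along that direction degenerates into the requirement that the component lie in $\ker d_h^X$, respectively $\ker d_v^X$, respectively both, i.e.\ in the cycle modules $Z^h_{p-1,q}(X)$, $Z^v_{p-1,q}(X)$, or $Z^h_p(Z^v_q(X))$; the other components are again determined, the leftover relations are automatic, and one reads off the stated bijections.

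The only delicate point is the bookkeeping: pinning down $\partial_hD^{p,q}(A)$ and $\partial_vD^{p,q}(A)$ correctly as placed disks — which of the two differentials is amputated and in which bidegree the free component survives — and carrying the sign $-1$ in $d_v\colon D^{p,q}(A)_{p,q}\to D^{p,q}(A)_{p,q-1}$ through the computation, so that it is genuinely the anticommutativity of $X$, and nothing else, that reconciles the two expressions for the corner component. There is no substantive obstacle, which is exactly why the lemma can be asserted to be easily verified; having produced the four bijections it remains only to note their naturality in $A$ and $X$, which is immediate.
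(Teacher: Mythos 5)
Your strategy is the right one and is exactly what the paper has in mind when it says the lemma ``can be easily verified'': exhibit, for each of the four small bicomplexes $F(A)$, a natural bijection $\bcplx(F(A),X)\cong\hom_k(A,G(X))$ by tracking which component of a morphism is free and how the remaining components are forced by commutation with the differentials. Your treatment of the $D^{p,q}$ case is correct in every detail, including the point that the sign $d_v=-1$ out of bidegree $(p,q)$ is precisely what makes the two routes to the corner component $f_{p-1,q-1}$ agree via $d_hd_v+d_vd_h=0$ in $X$; and your reading of the adjunction direction (the disks and spheres are the \emph{left} adjoints, i.e.\ corepresenting objects) is the one forced by Corollary \ref{cor:adjunctions} and by the use of these maps as generating cofibrations.

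There is, however, one concrete slip in the bookkeeping that you yourself single out as the only delicate point. The free generator of $\partial_vD^{p,q}$ is the vertical cycle $-d_v(x_{p,q})$, which sits in bidegree $(p,q-1)$, not $(p-1,q)$: the vertical boundary is the \emph{bottom row} of the square, a horizontally placed disk spanning bidegrees $(p,q-1)$ and $(p-1,q-1)$. Running your own argument there, the surviving component is a map $A\to X_{p,q-1}$ killed by $d_v^X$, so the bijection you obtain is
\[\bcplx(\partial_vD^{p,q}(A),X)\cong\hom_k(A,Z^v_{p,q-1}(X)),\]
i.e.\ the functor corepresented by $\partial_vD^{p,q}$ is $Z^v_{p,q-1}$, not $Z^v_{p-1,q}$ as you wrote (and as the lemma's display literally states --- the index there is a typo, as one can confirm against Corollary \ref{cor:adjunctions}, where the entries involving $\partial_vD^{p,q}$ are expressed through $Z^v_{*,q-1}(X)$ evaluated against $D^p$, hence in horizontal degree $p$). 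The other three cases are as you state them: $D^{p,q}$ corepresents $ev_{p,q}$, $\partial_hD^{p,q}$ (the left column, generated by $d_h(x_{p,q})$ in bidegree $(p-1,q)$) corepresents $Z^h_{p-1,q}$, and $S^{p,q}$ corepresents $Z^h_p\circ Z^v_q$. With the index in the $\partial_v$ case corrected, your proof is complete.
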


\medskip	
	We define the \emph{$(p,q)$-disc} $D^{p,q}$ as $D^{p,q}(k)$. We can view it as the bicomplex freely generated by a single element $x_{p,q}\in D^{p,q}$.  The free $k$-module generators are $x_{p,q}$, $d_h(x_{p,q})$, $-d_v(x_{p,q})$, and $d_vd_h(x_{p,q})=-d_hd_v(x_{p,q})$, respectively.
	The \emph{horizontal} and \emph{vertical boundaries} of the $(p,q)$-disk will again be denoted by $\partial_hD^{p,q}$ and $\partial_vD^{p,q}$, respectively. The \emph{$(p,q)$-sphere} is $k$ concentrated in bidegree $(p,q)$. These bicomplexes look as follows,
	\begin{center}
		\begin{tikzpicture}[execute at begin node=$, execute at end node=$]
		\draw[->, thick, opacity =.2] (3,1) -- (8,1);
		\draw[->, thick, opacity =.2] (3,0) -- (3,4);
		\node at (2.7,3) {\scriptscriptstyle q};
		\draw[thick, opacity=.2] (2.9,3) -- (3.1,3);
		\node at (2.55,2) {\scriptscriptstyle q-1};
		\draw[thick, opacity=.2] (2.9,2) -- (3.1,2);
		\node at (5,3) (E) {k};
		\node at (5,2) (F) {k};
		\node at (6,3) (C) {k};
		\node at (6,2) (D) {k};
		\draw[->] (C) -- node [right] {\scriptscriptstyle -1} node [left] {\scriptstyle D^{p,q}} (D);
		\draw[->] (E) -- (F);
		\draw[->] (C) -- (E);
		\draw[->] (D) -- (F);
		\node[fit=(D)(F), draw, inner sep=-.9,rounded corners=5] {};
		\node[fit=(E)(F), draw, inner sep=-.9,rounded corners=5] {};
		\node[fit=(F), draw, inner sep=-.9,rounded corners=5] {};
		\node at (6.8,2) {\scriptstyle \partial_vD^{p,q}};
		\node at (5,3.4) {\scriptstyle \partial_hD^{p,q}};
		\node at (4.5,1.5) {\scriptstyle S^{p-1,q-1}};
		\draw[thick, opacity=.2] (5,.9) -- (5,1.1);
		\node at (5,.7) {\scriptscriptstyle p-1};
		\draw[thick, opacity=.2] (6,.9) -- (6,1.1);
		\node at (6,.7) {\scriptscriptstyle p};
		\end{tikzpicture}
	\end{center}
	Here, unlabelled arrows are identities. 
\end{definition}

\begin{remark}
	The bicomplex $\partial_hD^{p,q}$ is freely generated by the horizontal cycle $d_h(x_{p,q})$ in bidegree $(p-1,q)$. Similarly, $\partial_vD^{p,q}$ is freely generated by the vertical cycle $-d_v(x_{p,q})$ in bidegree $(p,q-1)$. Since our bicomplexes are concentrated in non-negative horizontal degree, $\partial_hD^{1,q}$ is freely generated by the element $d_h(x_{1,q})$ in bidegree $(0,q)$. Indeed, morally, we can define $D^{0,q}=\partial_hD^{1,q}$ and $\partial_vD^{0,q}=S^{0,q-1}$. The reader can check that most of the properties of $D^{p,q}$ and $\partial_vD^{p,q}$ for $p>0$ extend to the case $p=0$ with these definitions, but we have preferred two avoid two different notations for the same object.
\end{remark}

As a consequence of Lemma \ref{representation}, we have the following useful natural isomorphisms, which we list for convenience. 

\begin{corollary}\label{cor:adjunctions}
	For any bicomplex $X$, $p>0$, and $q\in\mathbb Z$, there are natural isomorphisms
	\begin{align*}
		\bcplx(S^{0,q-1}\mono \partial_hD^{1,q},X)&\cong
		\cplx(S^{q-1}\mono D^{q},X_{0,*}),\\
		\bcplx(\partial_vD^{p,q}\mono D^{p,q},X)&\cong
		\cplx(S^{q-1}\mono D^{q},X_{p,*}),\\
		\bcplx(0\mono \partial_hD^{1,q},X)&\cong
		\cplx(0\mono D^{q},X_{0,*})\\
		&\cong
		\cplx(0\mono S^{0},X_{*,q}),\\
		\bcplx(0\mono S^{0,q-1},X)&\cong
		\cplx_{\geq0}(0\mono S^0,Z^v_{*,q-1}(X)),\\
		\bcplx(S^{p-1,q-1}\mono\partial_vD^{p,q},X)&\cong
		\cplx_{\geq0}(S^{p-1}\mono D^p,Z^v_{*,q-1}(X)),\\
		\bcplx(\partial_hD^{p,q}\mono D^{p,q},X)&\cong
		\cplx_{\geq 0}(S^{p-1}\mono D^{p},X_{*,q}),\\
		\bcplx(0\mono\partial_vD^{p,q},X)&\cong
		\cplx_{\geq0}(0\mono D^p,Z^v_{*,q-1}(X)).				
	\end{align*}	
\end{corollary}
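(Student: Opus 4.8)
The plan is to derive Corollary~\ref{cor:adjunctions} directly from the four adjunctions recorded in Lemma~\ref{representation}, by feeding in the specific bicomplexes $S^{p,q}$, $D^{p,q}$, $\partial_hD^{p,q}$, $\partial_vD^{p,q}$ as the domains/codomains of the maps on the left, and then translating the resulting hom-sets of $k$-modules back into hom-sets of (bounded-below or unbounded) chain complexes via the analogous adjunctions $ev_n\dashv D^n$ and $Z_n\dashv S^n$ recalled in Section~\ref{sec:complexes}. Concretely, each displayed isomorphism is obtained by applying one of the four adjoint pairs levelwise; for instance $\bcplx(\partial_vD^{p,q}\mono D^{p,q},X)$ is rewritten using $ev_{p,q}\dashv D^{p,q}$ on the $D^{p,q}$-end, giving a map out of $ev_{p,q}(\partial_vD^{p,q})=k$ into $X_{p,q}$ that must additionally land in the correct cycles/boundaries, and one recognises the side condition as exactly the defining condition for a chain map $S^{q-1}\mono D^q\to X_{p,*}$ in $\cplx$.

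First I would fix notation for the column $X_{p,*}$ and the row $X_{*,q}$ of a bicomplex, noting that $X_{p,*}$ is a chain complex under $d_v$ (for $p\ge 0$) and $X_{*,q}$ is a chain complex (concentrated in non-negative degrees) under $d_h$, and that $Z^v_{*,q-1}(X)$ is the $q-1$ row of the vertical-cycles bicomplex, which is a non-negatively graded complex under the induced $d_h$. Then I would handle the eight isomorphisms in a uniform way, grouped by which adjunction of Lemma~\ref{representation} is being invoked: the pair $ev_{p,q}\dashv D^{p,q}$ governs the lines involving $D^{p,q}$ and $\partial_hD^{1,q}$ in the codomain position (and also, via $D^{0,q}=\partial_hD^{1,q}$, the degenerate case $p=0$); the pairs $Z^h_{p-1,q}\dashv\partial_hD^{p,q}$ and $Z^v_{p-1,q}\dashv\partial_vD^{p,q}$ govern the lines ending in $\partial_hD^{p,q}$ and $\partial_vD^{p,q}$; and $Z^h_p\circ Z^v_q\dashv S^{p,q}$ governs the lines ending in $S^{p,q}$. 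In each case, a map of bicomplexes $Y\to X$ with $Y\in\{S,D,\partial_hD,\partial_vD\}$ is determined by its value on the canonical free generator(s) of $Y$, subject to the equations forced by the differentials of $Y$; spelling those equations out and comparing with the defining data of a chain map $S^{n-1}\mono D^n\to C$ (respectively $0\mono S^0\to C$, $0\mono D^n\to C$) in $\cplx$ or $\cplx_{\ge 0}$ yields the claimed bijection, and naturality in $X$ is automatic since every step is the application of an adjunction isomorphism.

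The only genuinely delicate points, none of them hard, are bookkeeping ones: keeping track of which column or row of $X$ appears (and, crucially, whether it is $X_{p,*}$, $X_{*,q}$, or $Z^v_{*,q-1}(X)$), getting the degree shifts right so that the generator of the sphere/disc really does sit in the degree claimed, and handling the sign $d_v=-1$ on $D^{p,q}_{p,q}$ together with the anticommutativity $d_vd_h+d_hd_v=0$ so that the relevant generator of $\partial_hD^{p,q}$ is a genuine horizontal cycle and likewise for $\partial_vD^{p,q}$ — these signs are precisely what make the auxiliary generators $d_h(x_{p,q})$ and $-d_v(x_{p,q})$ the correct free generators, as already observed in the remark following the definition of the disc. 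The main obstacle, such as it is, is therefore just the case $p=0$: here one must either invoke the identifications $D^{0,q}=\partial_hD^{1,q}$ and $\partial_vD^{0,q}=S^{0,q-1}$ and check the first and fourth displayed lines by hand, or observe that the $p=0$ instances follow from the $p\ge 1$ arguments with the evaluation adjunction replaced by the fact that $\bcplx$ is concentrated in non-negative horizontal degree, so that $\partial_hD^{1,q}$ is free on a single element in bidegree $(0,q)$. Since "the following can be easily verified" is the standing hypothesis on Lemma~\ref{representation} and the corollary is labelled as being "for convenience", I expect the write-up here to be short: state that each line is an instance of one of the four adjunctions followed by the corresponding $\cplx$ or $\cplx_{\ge 0}$ adjunction, and leave the levelwise verification of the side conditions to the reader.
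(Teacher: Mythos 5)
Your proposal is correct and takes essentially the same approach as the paper, which offers no written proof precisely because each line is read off from the four adjunctions of Lemma \ref{representation} matched against the corresponding $ev_n\dashv D^n$ and $Z_n\dashv S^n$ adjunctions for $\cplx$ and $\cplx_{\geq 0}$, with the $p=0$ cases handled via $D^{0,q}=\partial_hD^{1,q}$ and $\partial_vD^{0,q}=S^{0,q-1}$ as you note. One bookkeeping slip in your illustrative example: $ev_{p,q}(\partial_vD^{p,q})=0$, not $k$ --- the generator of $\partial_vD^{p,q}$ sits in bidegree $(p,q-1)$, so the restriction $\bcplx(D^{p,q},X)\to\bcplx(\partial_vD^{p,q},X)$ is the map $X_{p,q}\to Z^v_{p,q-1}(X)$ induced by $d_v$, which is exactly what matches $\cplx(D^{q},X_{p,*})\to\cplx(S^{q-1},X_{p,*})$.
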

\qed

We now consider the monoidal structure on bicomplexes.

\begin{definition}\label{monoidal_bicomplex}
	The \emph{tensor product} $X\otimes Y$ of two bicomplexes $X$ and $Y$ is the bicomplex defined as
	\[(X\otimes Y)_{p,q}=\bigoplus_{\substack{m+s=p\\n+t=q}}X_{m,n}\otimes Y_{s,t}\]
	with horizontal and vertical differentials defined as
	\begin{align*}
		d_h(x\otimes y)&=d_h(x)\otimes y+(-1)^{\abs{x}}x\otimes d_h(y),\\
		d_v(x\otimes y)&=d_v(x)\otimes y+(-1)^{\abs{x}}x\otimes d_v(y).
	\end{align*}
	Note that both formulas use the total degree in their sign conventions.
	
	This tensor product endows $\bcplx$ with a closed symmetric monoidal structure with obvious associativity and unit constraints. The tensor unit is $k$ concentrated in bidegree $(0,0)$. The symmetry constraint uses the Koszul sign rule with respect to the total degree,
	\begin{align*}
		X\otimes Y&\cong Y\otimes X,\\
		x\otimes y&\mapsto (-1)^{\abs{x}\abs{y}}y\otimes x.
	\end{align*}
	The mapping objects $\hom_{\bcplx}(X,Y)$ in $\bcplx$, adjoints to the tensor product, are defined by the $k$-modules
	\[\hom_{\bcplx}(X,Y)_{p,q}= \prod_{\substack{s\geq 0\\t\in\mathbb Z}}\hom_k(X_{s,t},Y_{s+p,t+q}),\qquad p> 0, \quad q\in\mathbb Z,\]
	and the submodules
	\[\hom_{\bcplx}(X,Y)_{0,q}\subset \prod_{\substack{s\geq 0\\t\in\mathbb Z}}\hom_k(X_{s,t},Y_{s,t+q}),\qquad q\in\mathbb Z,\]
	formed by the elements $f$ such that
	\[d_hf=(-1)^{\abs{f}}fd_h.\]
	The horizontal and vertical differentials are defined by
	\[d_h(f)=d_hf-(-1)^{\abs{f}}fd_h,\qquad d_v(f)=d_vf-(-1)^{\abs{f}}fd_v.\]
\end{definition}

\begin{remark}
	The previous definition would not work for bicomplexes with commuting differentials (see Remark \ref{commuting}). The readers which prefer commuting differentials  will probably find more natural to consider the horizontal and vertical degrees in the definition of the horizontal and vertical differentials of the tensor product. Indeed, this yields a bicomplex $X'\otimes'Y'$ with commuting differentials
	\begin{align*}
	d_h'(x'\otimes y')&=d_h'(x')\otimes y'+(-1)^{\abs{x'}_h}x'\otimes d_h'(y'),\\
	d_v'(x'\otimes y')&=d_v'(x')\otimes y'+(-1)^{\abs{x'}_v}x'\otimes d_v'(y').
	\end{align*}
	The underlying bigraded module of $X'\otimes'Y'$ is obviously defined as for $X\otimes Y$.
	
	In this case it ismore sensible to use the Koszul sign rule with respect to the horizontal and vertical degrees separately in the definition of the symmetry constraint, 
	\begin{align*}
	X'\otimes' Y'&\cong Y'\otimes' X',\\
	x'\otimes y'&\mapsto (-1)^{\abs{x'}_h\abs{y'}_h+\abs{x'}_v\abs{y'}_v}y'\otimes x'.
	\end{align*}
	This endows the category $\bcplx'$ of bicomplexes with commuting differentials with a closed symmetric monoidal structure.
	
	The isomorphism of categories $\bcplx\cong\bcplx'$ in Remark \ref{commuting} together
	with the natural isomorphism
	\begin{align*}
	(X\otimes Y)'&\cong X'\otimes' Y',\\
	x\otimes y&\mapsto (-1)^{\abs{x}_v\abs{y}_h}x\otimes y,
	\end{align*}
	defines a symmetric monoidal isomorphism.
	We will work with $\bcplx$ since certain computations are simpler here.
\end{remark}

\begin{definition}
	Given a bigraded module $X$, the graded module $\tot(X)$ is defined as
	\[\tot_n(X)=\bigoplus_{p+q=n}X_{p,q}.\]
	If $X$ is a bicomplex, $\tot(X)$ equipped with the differential 
		\[d_{\tot}=d_h+d_v\]
		is called the \emph{total complex} of $X$. 
		This construction defines the \emph{totalisation} exact functor
		\[\tot\colon \bcplx\To\cplx.\]
\end{definition}

\begin{remark}
The totalisation functor is strong symmetric monoidal in the obvious naive way. In addition, $\tot$ preserves (co)limits, since they are computed pointwise both in $\bcplx$ and $\cplx$. If we had used bicomplexes with commuting differentials, we would have had to include signs in the natural isomorphism comparing the tensor products in the source and in the target of $\tot$. 
\end{remark}

\begin{remark}\label{spectral_sequence}
	For any bigraded  module $X$, $\tot(X)$ has a natural increasing non-negative exhaustive filtration defined by
	\[F_m\tot_n(X)=\bigoplus_{\substack{p+q=n\\p\leq m}}X_{p,q}.\]
	If $X$ is a bicomplex, this filtration of the total complex $\tot(X)$ is compatible with the differential. Since our bicomplexes are concentrated in the right half-plane, the associated spectral sequence converges strongly to the homology of $\tot(X)$. The $E^2$-term is $H^h(H^v(X))$ \cite[Theorem 2.15]{McCleary},
	\[E^2_{p,q}=H^h_{p,q}(H^v(X))\Longrightarrow H_{p+q}(\tot(X)).\]
	This will play a central role in the model structures to be defined later.
\end{remark}

\section{The total model structure on bicomplexes}\label{sec:total}

Let $\mathcal{W}$ denote the class of $(H_*\circ \tot)$-isomorphisms in $\bcplx$, i.e.~maps which induce a quasi-isomorphism in totalisation, and let 
\begin{align*}
		I_{\tot}&=\{S^{0,q-1}\mono\partial_hD^{1,q}\}_{q\in\mathbb Z}\cup\{\partial_vD^{p,q}\mono D^{p,q}\}_{\substack{p>0\\q\in\mathbb Z}},\\
		J_{\tot}&=\{0\mono \partial_hD^{1,q}\}_{q\in\mathbb Z}\cup\{\partial_vD^{p,q}\mono D^{p,q}\}_{\substack{p>0\\q\in\mathbb Z}}.
	\end{align*}
In the same way that one constructs the projective model structure on chain complexes outlined in Section \ref{sec:complexes}, we are going to use Smith's recognition principle to show that this choice defines a cofibrantly generated model structure on $\bcplx$. We will then further characterise its cofibrations and fibrations.

\begin{theorem}\label{total_bicomplexes}
	The category of bicomplexes $\bcplx$ can be endowed with a proper combinatorial abelian model category structure called the \emph{total model structure} with the following properties:
\begin{itemize}
\item a morphism $f\colon X\to Y$ is a weak equivalence if $\tot(f)$ is a quasi-isomorphism in $\cplx$, i.e.~the class of weak equivalences is $\mathcal{W}$,
\item a morphism $f\colon X\to Y$ is a (trivial) fibration if it is pointwise surjective and $H^v_{p,\ast}(f)$ is an isomorphism for all $p>0$ (resp.~$p\geq 0$), 
\item the cofibrations are the injective maps with cofibrant cokernel. Cofibrant implies pointwise projective.
\end{itemize}
Furthermore, its generating cofibrations and trivial cofibrations are given by $I_{\tot}$ and $J_{\tot}$, respectively.
\end{theorem}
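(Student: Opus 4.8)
The plan is to verify the hypotheses of Smith's recognition principle \cite[Theorem 2.1.19]{hovey_model_1999} for the triple $(\mathcal W, I_{\tot}, J_{\tot})$, exactly mirroring the blueprint recalled in Section \ref{sec:complexes}, and then to extract the explicit descriptions of fibrations and cofibrations. First I would record the set-theoretic conditions: $\bcplx$ is locally finitely presentable, the (co)domains of the maps in $I_{\tot}$ and $J_{\tot}$ are finitely presentable bicomplexes, and $\mathcal W$ is an accessible class, being the preimage under the colimit-preserving functor $\tot$ of the accessible class of quasi-isomorphisms in $\cplx$. The two-out-of-three property and closure under retracts of $\mathcal W$ are immediate from the corresponding properties in $\cplx$ together with functoriality of $\tot$.

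The heart of the argument is the identification of the lifting classes via the adjunctions of Corollary \ref{cor:adjunctions}. I would first compute $I_{\tot}\mbox{-inj}$: using the displayed isomorphisms, $f\colon X\to Y$ lies in $I_{\tot}\mbox{-inj}$ iff each $f_{p,\ast}\colon X_{p,\ast}\to Y_{p,\ast}$ has the right lifting property against $S^{q-1}\mono D^q$ in $\cplx$ (for $p>0$) and $f_{0,\ast}$ has the right lifting property against $S^{q-1}\mono D^q$ in $\cplx$ as well, i.e.\ $f$ is pointwise surjective with pointwise acyclic kernel in each column; unwinding, this says $f$ is pointwise surjective and $H^v_{p,\ast}(f)$ is an isomorphism for all $p\geq 0$. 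Similarly $J_{\tot}\mbox{-inj}$ consists of the maps that are pointwise surjective on columns $p>0$ together with $f_{0,\ast}$ surjective (from $0\mono\partial_hD^{1,q}$), with the acyclicity of the kernel only imposed for $p>0$; that is, pointwise surjective with $H^v_{p,\ast}(f)$ an isomorphism for $p>0$. Thus the would-be trivial fibrations and fibrations are as claimed in the statement. The inclusion $I_{\tot}\mbox{-inj}\subseteq\mathcal W\cap J_{\tot}\mbox{-inj}$ is then clear: if $f$ is pointwise surjective with pointwise acyclic kernel in every column, its kernel $K$ has $H^v(K)=0$, so the spectral sequence of Remark \ref{spectral_sequence} forces $H_\ast(\tot K)=0$, whence $\tot(f)$ is a quasi-isomorphism by the long exact sequence, and obviously $f\in J_{\tot}\mbox{-inj}$.

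The remaining two bullets require more care. For $J_{\tot}\mbox{-cell}\subseteq\mathcal W\cap I_{\tot}\mbox{-cof}$: the inclusion in $I_{\tot}\mbox{-cof}$ is formal once one checks each generator of $J_{\tot}$ is in $I_{\tot}\mbox{-cof}$, i.e.\ lifts against every pointwise surjection with pointwise acyclic columns — $\partial_vD^{p,q}\mono D^{p,q}$ is common to both sets, and $0\mono\partial_hD^{1,q}$ reduces via Corollary \ref{cor:adjunctions} to $0\mono D^q$ in $\cplx$, which is a trivial cofibration there. For membership in $\mathcal W$ one checks the generators are sent by $\tot$ to quasi-isomorphisms (again $\tot(\partial_hD^{1,q})$ and $\tot(D^{p,q})$ are contractible complexes, and $\tot$ of the inclusions are injections with contractible cokernel) and then invokes that $\mathcal W\cap I_{\tot}\mbox{-cof}$, equivalently the $\tot$-preimage of the trivial cofibrations, is closed under pushout and transfinite composition because $\tot$ preserves colimits and $\cplx$ is a model category. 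Finally, for the last bullet I would verify $\mathcal W\cap J_{\tot}\mbox{-inj}\subseteq I_{\tot}\mbox{-inj}$: if $f$ is pointwise surjective, $H^v_{p,\ast}(f)$ an isomorphism for $p>0$, and $\tot(f)$ a quasi-isomorphism, then with kernel $K$ one has $H^v_{p,\ast}(K)=0$ for $p>0$ and $H_\ast(\tot K)=0$; the spectral sequence $E^2_{p,q}=H^h_{p,q}(H^v(K))\Rightarrow H_{p+q}(\tot K)$ is concentrated in the column $p=0$ from the $E^2$-page on, hence degenerates, giving $H^h_{0,q}(H^v(K))=0$, i.e.\ $H^v_{0,\ast}(K)=0$ as well; therefore $f\in I_{\tot}\mbox{-inj}$. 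This spectral-sequence comparison in the kernel is the main obstacle — everything else is a translation through Corollary \ref{cor:adjunctions} — and it is precisely where the right-half-plane hypothesis (ensuring strong convergence) is used. With the model structure in hand, properness follows since all objects have fibrant-like behaviour under $\tot$ and $\cplx$ is proper, combinatoriality is built into Smith's theorem, cofibrations are the injections with cofibrant cokernel because the generating cofibrations are injections with (pointwise projective, hence) cofibrant cokernel and this class is closed under the relevant operations, and the abelian property is the conjunction of the fibration and cofibration descriptions just obtained; pointwise projectivity of cofibrant objects follows by lifting against the pointwise-surjective trivial fibrations $D^{p,q}(A)\onto S^{p,q}(A)$-type maps as in the chain complex case.
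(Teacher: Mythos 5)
Your proposal is correct and follows essentially the same route as the paper: identify $I_{\tot}\mbox{-inj}$ and $J_{\tot}\mbox{-inj}$ via Corollary \ref{cor:adjunctions}, reduce to the kernel and use the spectral sequence of Remark \ref{spectral_sequence} for $\mathcal W\cap J_{\tot}\mbox{-inj}\subseteq I_{\tot}\mbox{-inj}$, and push $J_{\tot}$-cell complexes through the colimit-preserving functor $\tot$ for $J_{\tot}\mbox{-cell}\subseteq\mathcal W\cap I_{\tot}\mbox{-cof}$. The only slip is in your final parenthetical: $D^{p,q}(A)\onto S^{p,q}(A)$ is not a trivial fibration (the target has nonvanishing $H^v$ in horizontal degree $p$); the maps to lift against are $D^{p,q}(A)\onto D^{p,q}(B)$ for surjections of modules $A\onto B$, or one argues as the paper does that $I_{\tot}$-cell complexes have pointwise free cokernels.
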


\begin{proof}
	The proposed weak equivalences in $\bcplx_{\tot}$ are clearly closed under retracts and satisfy the $2$-out-of-$3$ property. 
	
\medskip
We will now verify the various lifting properties we require for our proof, making use of the identities in Lemma \ref{representation} and Corollary \ref{cor:adjunctions}. We use them in combination with the model structures on $\cplx$ and $\cplx_{\geq 0}$ reviewed in the previous sections, whose generating (trivial) cofibrations we know.

\medskip
A map $f\colon X \to Y$ in $\bcplx$ has the right lifting property with respect to $0 \rightarrow \partial_h D^{1,q}$ if and only if $f_{0,\ast}\colon X_{0,\ast} \rightarrow 	Y_{0, \ast}$ has the right lifting property with respect to $0\rightarrow D^q$. This happens for all $q\in\mathbb Z$ whenever $f_{0,\ast}$ is a fibration in $\cplx$, i.e.~degreewise surjective.

For $p>0$, having the right lifting property with respect to $\partial_vD^{p,q} \rightarrow D^{p,q}$ is equivalent to $f_{p,\ast}: X_{p,\ast} \rightarrow Y_{p, \ast}$ having the right lifting property with respect to all $S^{q-1} \rightarrow D^q$. This happens for all $q\in\mathbb Z$ precisely when $f_{p,\ast}$ is a trivial fibration in $\cplx$, i.e.~degreewise surjective as well as a homology isomorphism.

Similarily, having the right lifting property with respect to $S^{0,q-1} \rightarrow \partial_hD^{1,q}$ is equivalent to $f_{0,\ast}: X_{0, \ast} \rightarrow Y_{0,\ast}$ having the right lifting property with respect to $S^{q-1} \rightarrow D^q$ in $\cplx$. This happens for all $q\in\mathbb Z$ whenever $f_{0,\ast}$ is a trivial fibration in $\cplx$. 

We can summarise our findings as follows.
\begin{align*}
I_{\tot}\mbox{-inj}=\{\mbox{degreewise}&\mbox{ surjective $f$ such that}\\&\mbox{$H^v_{p,*}(f)$ is an isomorphism for all $p\ge0$} \}
\end{align*}
and
\begin{align*}
J_{\tot}\mbox{-inj}=\{\mbox{degreewise}&\mbox{ surjective $f$ such that}\\&\mbox{$H^v_{p,*}(f)$ is an isomorphism for all $p>0$}. \}
\end{align*}
This is consistent with our claims about the (trivial) fibrations in $\bcplx_{\tot}$. 

\bigskip
Obviously, $I_{\tot}\mbox{-inj}\subseteq J_{\tot}\mbox{-inj}$. 
 Because of the spectral sequence in Remark \ref{spectral_sequence}, a map that induces an isomorphism in vertical homology also induces an $(H_*\circ\tot)$-isomorphism. Therefore, a map in $I_{\tot}$-inj is also in $\mathcal{W}$ so
\[
I_{\tot}\mbox{-inj} \subseteq \mathcal{W} \cap J_{\tot}\mbox{-inj}.
\]
We also have the opposite inclusion. 

By the long exact homology sequence, a degreewise surjective map $f$ is a weak equivalence, respectively an $H^v_{p,\ast}$-isomorphism, if and only if $\ker f \rightarrow 0$ is. But if $X\to 0$ is $J_{\tot}$-injective, then $H^v_{0,*}(X)=H_*(\tot(X))$ by the spectral sequence. Therefore, a $J_{\tot}$-injective $X\rightarrow 0$ is $I_{\tot}$-injective  if and only if it is a weak equivalence.

So, altogether we arrive at
\[
I\mbox{-inj} = \mathcal{W} \cap J\mbox{-inj}.
\]
	
\bigskip
For the existence of the claimed model structure, it remains to prove that
\[
J_{\tot}\mbox{-cell} \subseteq \mathcal{W} \cap I_{\tot}\mbox{-cof}.
\]	
We always have $J_{\tot}\mbox{-cell} \subseteq J_{\tot}\mbox{-cof}.$ As $I_{\tot}\mbox{-inj} \subseteq J_{\tot}\mbox{-inj}$, we get $J_{\tot}\mbox{-cof} \subseteq I_{\tot}\mbox{-cof}.$ All source and target objects of elements of $J_{\tot}$ are $\tot$-acyclic, therefore $J_{\tot} \subseteq \mathcal{W}$. Moreover, $\tot$ preserves colimits and takes $J_{\tot}$ to trivial cofibrations in $\cplx$. Thus, $J_{\tot}$-cell complexes are weak equivalences, which is what we wanted to prove. Thus, we proved all the conditions of the recognition principle, meaning that we have a model structure on $\bcplx$ with weak equivalences $\mathcal{W}$, generating cofibrations $I_{\tot}$, generating acyclic cofibrations $J_{\tot}$, fibrations $J_{\tot}$-inj, and trivial fibrations $I_{\tot}$-inj.

\bigskip
The $I_{\tot}$-cell complexes are injections with pointwise free cokernel, since a push-out along one of the two classes of maps in $I_{\tot}$ adds a free factor $k$ in bidegrees  $(0,q)$ or $(p,q)$ and $(p-1,q)$, respectively. Hence cofibrations are injections with pointwise projective cokernel. This can be alternatively checked as in the first parts of the proofs of \cite[Proposition 2.3.9 and Lemma 2.3.6]{hovey_model_1999}. A long exact sequence argument as above shows that a map $f\colon X\rightarrow Y$ in $\bcplx_{\tot}$ is a (trivial) fibration if and only if it is a pointwise surjection with (trivially) fibrant kernel. These remarks prove that $\bcplx_{\tot}$ is an abelian model category in the sense of \cite[Definition 2.1]{hovey_cotorsion_2007}, see \cite[Proposition 4.2]{hovey_cotorsion_2002}, and also the third item in the statement.

\bigskip	
Finally, the functor $\tot$ takes $I_{\tot}$ to cofibrations in $\cplx$, and it also preserves (co)limits, fibrations, and weak equivalences (the latter by definition). We conclude that $\bcplx_{\tot}$ is proper, since $\cplx$ is.
\end{proof}

\begin{remark}\label{total_model_remarks}
	By the characterization of (trivial) fibrations in the total model structure, a bicomplex $X$ is fibrant in $\bcplx_{\tot}$ whenever its vertical homology is concentrated in horizontal degree $0$, i.e.~$H^v_{p,q}(X)=0$ if $p>0$. It is trivially fibrant if the vertical homology vanishes completely.
	
	The cokernels of generating cofibrations are $S^{0,q}$, $\partial_vD^{p,q}$, $p>0$, $q\in\mathbb Z$. The horizontal homology of these bicomplexes is projective and concentrated in horizontal degree $0$. Hence it is easy to derive that any cofibrant bicomplex $X$ satisfies $H_{p,q}^h(X)=0$ for $p>0$ and $H_{0,q}^h(X)$ is always projective. 
	
\end{remark}

The model structure on $\bcplx_{\tot}$ is also well-behaved with regards to the tensor product of bicomplexes. In order to show monoidality, it does not matter that we have not given an explicit characterisation of the cofibrations in $\bcplx_{\tot}$ as we can use a result specific to abelian model categories.

\begin{proposition}\label{prop:monoidal}
The model category $\bcplx_{\tot}$ is monoidal with cofibrant unit and satisfies the monoid axiom.
\end{proposition}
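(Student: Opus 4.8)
The plan is to verify the pushout-product axiom and the monoid axiom using the adjunction machinery from Lemma~\ref{representation} and Corollary~\ref{cor:adjunctions}, together with the observation that the total model structure is abelian, so that we may invoke the criteria for monoidal abelian model categories (e.g.\ \cite{hovey_cotorsion_2007}) rather than needing an explicit description of all cofibrations. First I would record that the tensor unit $S^{0,0}$ is cofibrant: it is the cokernel of the generating cofibration $S^{0,-1}\mono\partial_hD^{1,0}$ (equivalently, $D^{0,0}$ in the alternative notation of the Remark), hence cofibrant by the characterisation of cofibrations as injections with cofibrant cokernel.

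Next I would check the pushout-product axiom. By the standard reduction it suffices to show that for generating cofibrations $i\in I_{\tot}$ and $j\in I_{\tot}$ the pushout-product $i\mathbin{\square}j$ is a cofibration, and that if moreover one of them lies in $J_{\tot}$ then $i\mathbin{\square}j$ is a trivial cofibration. For the first part I would compute the tensor products of the relevant small bicomplexes $S^{p,q}$, $\partial_hD^{1,q}$, $\partial_vD^{p,q}$, $D^{p,q}$ explicitly; each is a finitely generated free bicomplex, and the pushout-products are again injections with pointwise free cokernel, hence cofibrations by Theorem~\ref{total_bicomplexes}. For the trivial-cofibration half, the cleanest route is via totalisation: $\tot$ is strong symmetric monoidal, preserves colimits, and sends $I_{\tot}$ to cofibrations and $J_{\tot}$ to trivial cofibrations in $\cplx$; since the projective model structure on $\cplx$ is monoidal, $\tot(i\mathbin{\square}j)=\tot(i)\mathbin{\square}\tot(j)$ is a trivial cofibration in $\cplx$, in particular a quasi-isomorphism, so $i\mathbin{\square}j\in\mathcal W$; combined with the first part it is a trivial cofibration. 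One must also confirm the unit part of the axiom, which is immediate since $S^{0,0}$ is cofibrant.

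For the monoid axiom I would argue similarly: one must show that every map obtained as a transfinite composite of pushouts of maps of the form $\mathrm{id}_Z\otimes j$ with $j$ a trivial cofibration (it suffices to take $j\in J_{\tot}$) and $Z$ an arbitrary bicomplex is a weak equivalence. Applying $\tot$, which preserves colimits, we get a transfinite composite of pushouts of maps $\tot(Z)\otimes\tot(j)$; since $\tot(j)$ is a trivial cofibration in $\cplx$ and $\cplx$ satisfies the monoid axiom of Schwede--Shipley \cite{schwede_algebras_2000}, the resulting map in $\cplx$ is a quasi-isomorphism, hence the original map is in $\mathcal W$. I expect the main obstacle to be purely bookkeeping: carrying out the explicit tensor-product computations for all pairs of generators in $I_{\tot}$ (including the slightly exceptional horizontal-degree-zero generators $S^{0,q-1}\mono\partial_hD^{1,q}$) and checking in each case that the cokernel of the pushout-product is pointwise projective (in fact free); the trivial-cofibration and monoid-axiom parts are then essentially formal consequences of the monoidality and monoid axiom for $\cplx$ transported along the strong monoidal, colimit-preserving, weak-equivalence-reflecting-on-the-nose functor $\tot$.
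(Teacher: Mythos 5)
Your overall strategy (reduce to generating (trivial) cofibrations, compute tensor products of the small bicomplexes explicitly, and use $\tot$ to handle acyclicity and the monoid axiom) is close to the paper's, and your treatment of the unit, of the acyclicity half of the pushout-product axiom via the strong monoidal colimit-preserving functor $\tot$, and of the monoid axiom are all sound. However, there is one genuine gap in the cofibrancy half: you conclude that the pushout-products are cofibrations because they are ``injections with pointwise free cokernel, hence cofibrations by Theorem~\ref{total_bicomplexes}''. That theorem states the implication in the other direction only: cofibrations are injections with \emph{cofibrant} cokernel, and cofibrant implies pointwise projective --- but pointwise free does not imply cofibrant in $\bcplx_{\tot}$. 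Indeed, by Remark~\ref{total_model_remarks} any cofibrant bicomplex $X$ has $H^h_{p,q}(X)=0$ for $p>0$, so e.g.\ $S^{1,0}$ is pointwise free but not cofibrant, and $0\mono S^{1,0}$ is an injection with pointwise free cokernel that is not a cofibration. Thus ``pointwise free cokernel'' is not a usable criterion here, and the cofibrancy of the tensor product of two cofibrant objects is precisely the nontrivial content that has to be verified.

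The fix is what the paper actually does: identify the cokernels of the pushout-products (equivalently, the tensor products of the cokernels of the generating cofibrations) explicitly as standard cofibrant objects, namely
\[
\partial_vD^{p,q}\otimes\partial_vD^{s,t}\cong\partial_vD^{p+s,q+t-1}\oplus\partial_vD^{p+s-1,q+t-1},\quad
\partial_vD^{p,q}\otimes S^{0,t}\cong\partial_vD^{p,q+t},\quad
S^{0,q}\otimes S^{0,t}\cong S^{0,q+t},
\]
together with $S^{0,q}\otimes\partial_hD^{1,t}\cong\partial_hD^{1,q+t}$ and $\partial_vD^{p,q}\otimes\partial_hD^{1,t}\cong D^{p,q+t-1}$; each of these is a (sum of) cokernel(s) of generating (trivial) cofibrations, hence cofibrant, which is what Hovey's criterion for abelian monoidal model categories \cite[Theorem 4.2]{hovey_cotorsion_2007} requires. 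Once this identification is made, your $\tot$-based argument for the trivial-cofibration half and for the monoid axiom goes through as you describe (and is in fact slightly slicker than the paper's observation that $D^{p,q+t-1}$ is trivially cofibrant).
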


\begin{proof}
For the monoidality of $\bcplx_{\tot}$, we check the conditions of \cite[Theorem 4.2]{hovey_cotorsion_2007}:
\begin{enumerate}
\item every cofibrant object of $\bcplx_{\tot}$ is flat,
\item the tensor product of cofibrant objects is again cofibrant,
\item if $X$ and $Y$ are cofibrant objects and one of them is acyclic, then their tensor product is acyclic,
\item the unit of the tensor product is cofibrant.
\end{enumerate}

Cofibrant objects are flat since their underlying bigraded modules are projective. The tensor unit is cofibrant since it is the cokernel of the generating cofibration $S^{0,-1}\mono \partial_hD^{1,0}$. Therefore we have (1) and (4).

\bigskip
Let us now check conditions (2) and (3). For a general model category, it is enough to check the pushout-product axiom on generating (trivial) cofibrations, see \cite[Corollary 4.2.5]{hovey_model_1999}. So for abelian model categories, it is sufficient to check (2) and (3) on 
 cokernels of generating (trivial) cofibrations, rather than on arbitrary (trivially) cofibrant objects. This follows from the proof of \cite[Theorem 4.2]{hovey_cotorsion_2007} in \cite[Theorem 7.2]{hovey_cotorsion_2002}. 

\bigskip
 The cokernels of generating (trivial) cofibrations are 
 \[
 S^{0,q} \,\,\,\mbox{and} \,\,\, \partial_vD^{p,q}, \,\,\, p > 0, \,\, q \in \mathbb{Z},
 \]
as well as
 \[
 \partial_h D^{1,q} \,\,\,\mbox{and}\,\,\, \partial_vD^{p,q}, \,\,\, p > 0, \,\, q \in \mathbb{Z},
 \]
 respectively.
It is straightforward to verify that 
\begin{eqnarray*}
\partial_vD^{p,q} \otimes \partial_vD^{s,t}  & \cong & \partial_vD^{p+s,q+t-1}\oplus \partial_vD^{p+s-1,q+t-1}, \\
\partial_vD^{p,q}\otimes S^{0,t} & \cong & \partial_vD^{p,q+t}, \\
S^{0,q}\otimes S^{0,t} & \cong  & S^{0,q+t}.
\end{eqnarray*}
Moreover, 
\begin{eqnarray*}
	S^{0,q}\otimes \partial_hD^{1,t} & \cong  & \partial_hD^{1,t+q},\\
	\partial_vD^{p,q} \otimes \partial_hD^{1,t}  & \cong & D^{p,q+t-1}, 
\end{eqnarray*}
which we can see is trivially cofibrant for all $p$ and $q$. This concludes the proof of monoidality.

\bigskip
The monoid axiom \cite[Definition 3.3]{schwede_algebras_2000} follows from the fact that $\tot$ takes generating trivial cofibrations in $\bcplx_{\tot}$ to trivial cofibrations in $\cplx$.
\end{proof}

Finally, we arrive at the following result. 

\begin{proposition}\label{quillen_equivalence_bicomplexes_complexes}
	The inclusion of chain complexes as bicomplexes concentrated in horizontal degree $0$ is the left adjoint of a strong symmetric monoidal Quillen equivalence
	\[\cplx \lradjunction \bcplx_{\tot}. \]
\end{proposition}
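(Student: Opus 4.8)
The plan is to exhibit an explicit adjunction and then verify the Quillen equivalence axioms. First I would set up the functors: let $\iota\colon\cplx\to\bcplx$ send a complex $C$ to the bicomplex with $\iota(C)_{0,q}=C_q$, $\iota(C)_{p,q}=0$ for $p>0$, horizontal differential zero, and vertical differential equal to the differential of $C$. Since $\iota(C)$ is concentrated in horizontal degree $0$, the anticommutation relation $d_vd_h+d_hd_v=0$ holds trivially, and $\tot(\iota(C))=C$ on the nose. The right adjoint should be $R(X)=Z^h_{0,*}(X)=\ker[d_h\colon X_{0,*}\to X_{-1,*}]=X_{0,*}$ (since $X_{-1,*}=0$), equipped with the vertical differential; more conceptually $R(X)=X_{0,*}$ because any morphism $\iota(C)\to X$ lands in horizontal degree $0$ and must commute with $d_h$, forcing its image into the horizontal cycles in horizontal degree $0$, which is all of $X_{0,*}$. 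I would record the natural isomorphism $\bcplx(\iota(C),X)\cong\cplx(C,X_{0,*})$, which is essentially immediate, and note $\iota$ is strong symmetric monoidal since $\iota(C)\otimes\iota(C')$ is again concentrated in horizontal degree $0$ and the tensor-product formula for bicomplexes restricted there is exactly the tensor product of complexes (the total-degree sign convention restricts to the usual Koszul sign on complexes concentrated in a single horizontal degree).

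Next I would verify that $(\iota,R)$ is a Quillen pair. It suffices to show $\iota$ preserves cofibrations and trivial cofibrations, or equivalently that $R$ preserves fibrations and trivial fibrations. Using the characterisation of (trivial) fibrations in $\bcplx_{\tot}$ from Theorem \ref{total_bicomplexes}: if $f\colon X\to Y$ is pointwise surjective with $H^v_{p,*}(f)$ an isomorphism for all $p>0$ (resp.\ $p\ge 0$), then $R(f)=f_{0,*}\colon X_{0,*}\to Y_{0,*}$ is certainly surjective, and I must see it is a fibration (resp.\ trivial fibration) in $\cplx$. Surjectivity gives the fibration condition immediately. For trivial fibrations, $H^v_{0,*}(f)$ being an isomorphism is precisely the statement that $f_{0,*}$ is a homology isomorphism of complexes, so $R(f)$ is a trivial fibration. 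Hence $(\iota,R)$ is a Quillen adjunction. Alternatively one can argue on the left: $\iota$ sends $I_{\cplx}$ into $I_{\tot}$-cofibrations and $J_{\cplx}$ into $J_{\tot}$-cofibrations by inspecting generators, e.g.\ $\iota(S^{q-1}\mono D^q)$ identifies with $S^{0,q-1}\mono\partial_hD^{1,q}$ up to the conventions in Corollary \ref{cor:adjunctions}.

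Then I would prove it is a Quillen equivalence. By \cite[Proposition 1.3.13]{hovey_model_1999} it suffices to check that for every cofibrant $C\in\cplx$ and every fibrant $X\in\bcplx_{\tot}$, a map $\iota(C)\to X$ is a weak equivalence in $\bcplx_{\tot}$ if and only if its adjunct $C\to R(X)=X_{0,*}$ is a quasi-isomorphism; combined with the fact that $\iota$ reflects weak equivalences between cofibrant objects this gives the equivalence. The key geometric input is Remark \ref{total_model_remarks}: a fibrant $X$ has vertical homology concentrated in horizontal degree $0$, i.e.\ $H^v_{p,q}(X)=0$ for $p>0$. For such $X$ the spectral sequence of Remark \ref{spectral_sequence} has $E^2_{p,q}=H^h_{p,q}(H^v(X))$ supported in the single column $p=0$, hence collapses, yielding $H_n(\tot X)\cong H^v_{0,n}(X)=H_n(X_{0,*})$ naturally. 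Thus for $X$ fibrant the totalisation weak equivalence $\iota(C)\to X$ is detected by $H_*(C)\to H_*(X_{0,*})$, which is exactly the adjunct being a quasi-isomorphism. Finally, $\iota$ reflects weak equivalences outright since $\tot\circ\iota=\mathrm{id}$, so an arbitrary $\iota(f)$ is a weak equivalence iff $f$ is a quasi-isomorphism; this settles the ``unit'' half. The main obstacle is the collapse-of-the-spectral-sequence step: one must make sure the identification $H_*(\tot X)\cong H_*(X_{0,*})$ is natural and, more importantly, compatible with the adjunction counit $\iota(R X)=\iota(X_{0,*})\to X$, so that the derived unit and counit come out to be weak equivalences rather than merely abstract isomorphisms on homology; everything else is bookkeeping with the conventions already fixed in the excerpt.
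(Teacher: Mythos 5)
Your proposal is correct and follows essentially the same route as the paper: identify the right adjoint as $X\mapsto X_{0,*}$, check the Quillen pair on generators (or equivalently via the characterisation of fibrations), and reduce the Quillen equivalence criterion to the collapse of the spectral sequence for fibrant $X$, which gives $H_*(X_{0,*})=H^v_{0,*}(X)\cong H_*(\tot X)$ via the edge map induced by the inclusion $X_{0,*}=F_0\tot(X)\subseteq\tot(X)$. The compatibility concern you raise at the end is resolved exactly by this edge-map description, and the paper handles it the same way (even noting, as you could, that cofibrancy of the source is not needed).
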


\begin{proof}

	The right adjoint $\bcplx_{\tot} \rightarrow \cplx$ in this adjunction is given by  $X\mapsto X_{0,*}$. The left adjoint obviously preserves the tensor product and the tensor unit. Moreover, it sends the standard generating (trivial) cofibrations in $\cplx$ to 
	the first factors of the unions defining the sets of generating (trivial) cofibrations of $\bcplx_{\tot}$. Hence it preserves arbitrary (trivial) cofibrations and therefore is a left Quillen functor.

	\bigskip
	Next, let us check that this Quillen pair is a Quillen equivalence. Let $$i\colon \cplx\rightarrow\bcplx$$ denote the left adjoint. We have to prove that if $X$ is cofibrant in $\cplx$ and $Y$ is fibrant in $\bcplx_{\tot}$ then a map $$f\colon iX\rightarrow Y$$ is a weak equivalence if and only if the adjoint map $$f_{0,*}\colon X\rightarrow Y_{0,*}$$ is. We will prove this statement for $X$ not necessarily cofibrant. We have that $$\tot(iX)=X=(iX)_{0,*}.$$ Moreover, since $Y$ is fibrant, we have  $$H_{*}(Y_{0,*})=H^v_{0,*}(Y)=H_v(\tot(Y)).$$ This was checked within the proof of Theorem \ref{total_bicomplexes}. Hence $$H_*(\tot(iX))\rightarrow H_*(\tot(Y))$$ is an isomorphism if and only if $H_*(X)\rightarrow H_*(Y_{0,*})$ is.
\end{proof}

\section{The Cartan--Eilenberg model structure on bicomplexes}\label{sec:ce}

In this section, we will introduce a different model structure on the category of bicomplexes $\bcplx$, namely the \emph{Cartan--Eilenberg model structure} $\bcplx_{\CE}$. With regards to the spectral sequence in Remark \ref{spectral_sequence}, the total model structure from Section \ref{sec:total} can be thought of as the ``limit model structure'' as the weak equivalences are exactly those maps inducing isomorphisms on the respective limits. (By limit, we mean the homology of the total complex, not the $E^\infty$-term.) In analogy to this, the Cartan--Eilenberg model structure can be considered the ``$E^2$-model structure''. The spectral sequence is strongly convergent, hence the weak equivalences of $\bcplx_{\CE}$ are contained in those of $\bcplx_{\tot}$. In this new model category $\bcplx_{\CE}$, a cofibrant resolution of a chain complex, regarded as bicomplex concentrated in horizontal degree $0$, is a Cartan--Eilenberg resolution \cite{cartan_homological_1956, Weibel_1994}.

	We suspect that the Cartan--Eilenberg model structure coincides with the model structure that could be obtained by transferring Sagave's $E^2$-model structure on simplicial chain complexes \cite{sagave_dg-algebras_2010} along the given Dold--Kan equivalence. Weak equivalences obviously match, but we have not checked the details concerning (co)fibrations. We think it is simpler to directly construct our Cartan--Eilenberg model structure, at the very least because we obtain easier generating (trivial) cofibrations which allow for a straightforward identification of (trivial) fibrations.
	For $0\leq r<\infty$, $E^r$-equivalences have been studied by Cirici, Santander, Livernet, and Whitehouse in \cite{cirici-egas-livernet-whitehouse}, not only for maps of bicomplexes but for twisted maps of twisted complexes.

\bigskip
Again, we will define weak equivalences, generating cofibrations, and trivial cofibrations, and then show that they create a model structure in the way that they are supposed to. Define
\[
\mathcal{W} = \{ f: X \rightarrow Y \in \bcplx \,\,|\,\, H^h_{p,q}(H^v(f)) \mbox{ is an isomorphism for all $p\ge 0, q \in \mathbb{Z}$} \}
\]
and 
\begin{align*}
	I_{\CE}={}&
	\{0\mono S^{0,q}\}_{q\in\mathbb Z}\cup
	\{S^{p-1,q-1}\mono\partial_vD^{p,q}\}_{\substack{p>0\\q\in\mathbb Z}}\\&\cup\{0\mono\partial_hD^{1,q}\}_{q\in\mathbb Z}\cup\{\partial_hD^{p,q}\mono D^{p,q}\}_{\substack{p>0\\q\in\mathbb Z}},\\
	J_{\CE}={}&\{0\mono\partial_vD^{p,q}\}_{\substack{p>0\\q\in\mathbb Z}}\cup\{0\mono\partial_hD^{1,q}\}_{q\in\mathbb Z}\cup\{\partial_hD^{p,q}\mono D^{p,q}\}_{\substack{p>0\\q\in\mathbb Z}}.
	\end{align*}

\begin{theorem}\label{Cartan_Eilenberg}
	There is a combinatorial model structure $\bcplx_{\CE}$ on the category of bicomplexes satisfying the following:
	\begin{itemize}
	\item Weak equivalences are those morphisms $f$ such that $H^h_{p,q}(H^v(f))$ is an isomorphism for all $p\geq 0$ and $q\in\mathbb Z$.
	\item $f$ is a fibration if
	\begin{itemize}
	\item if is degreewise surjective,
	\item $Z^v_{p,q}(f)$ is surjective if $p>0$,
	\item  $H^h_{p,q}(f)$ is an isomorphism for all $p$ and $q$.
	\end{itemize}
	\item $f$ is a trivial fibration if in addition $H^h_{p,q}(Z^v(f))$ is an isomorphism for all $p$ and $q$.
	\item If $f$ is a cofibration, then $f$ and $H^v(f)$ are injective and the cokernels of $B^v(f)$ and $H^v(f)$ are degreewise projective.
	\end{itemize}
	
\end{theorem}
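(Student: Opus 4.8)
The plan is to mimic the proof of Theorem \ref{total_bicomplexes}: use Smith's recognition principle as stated in \cite[Theorem 2.1.19]{hovey_model_1999}, with weak equivalences $\mathcal W$, generating cofibrations $I_{\CE}$, and generating trivial cofibrations $J_{\CE}$. First I would record that $\mathcal W$ has the two-out-of-three property and is closed under retracts; this is immediate since $H^h\circ H^v$ is a functor to graded modules. The bulk of the work is, exactly as before, identifying $I_{\CE}\text{-inj}$ and $J_{\CE}\text{-inj}$ explicitly by translating each generating map through the adjunction isomorphisms of Corollary \ref{cor:adjunctions} and Lemma \ref{representation}, reducing lifting problems in $\bcplx$ to lifting problems in $\cplx$ and $\cplx_{\geq 0}$ against the standard generating (trivial) cofibrations listed in Section \ref{sec:complexes}. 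Concretely: lifting against $0\mono S^{0,q}$ controls surjectivity of $Z^v_{0,q}$; lifting against $S^{p-1,q-1}\mono\partial_vD^{p,q}$ controls $H^h_{*,q-1}(Z^v(f))$-isomorphisms and surjectivity of $Z^v_{p,*}$; lifting against $0\mono\partial_hD^{1,q}$ controls degreewise surjectivity in horizontal degree $0$ (equivalently $f_{*,q}$ surjective); and lifting against $\partial_hD^{p,q}\mono D^{p,q}$, via the isomorphism $\bcplx(\partial_hD^{p,q}\mono D^{p,q},X)\cong\cplx_{\geq0}(S^{p-1}\mono D^p,X_{*,q})$, controls degreewise surjectivity in positive horizontal degrees together with $H^h_{p,q}(f)$-isomorphisms. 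Assembling these gives the stated descriptions of fibrations and trivial fibrations, and in particular $I_{\CE}\text{-inj}\subseteq J_{\CE}\text{-inj}$.

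Next I would verify the three remaining conditions of the recognition principle. For $I_{\CE}\text{-inj}\subseteq\mathcal W\cap J_{\CE}\text{-inj}$: a trivial fibration is degreewise surjective with $Z^v(f)$ surjective in positive horizontal degree and both $H^h(f)$ and $H^h(Z^v(f))$ isomorphisms; a long exact sequence argument applied to the short exact sequences $0\to Z^v(X)\to X\to B^v(X)[\text{shift}]\to 0$ and $0\to B^v(X)\to Z^v(X)\to H^v(X)\to 0$ then shows $H^v(f)$ is an isomorphism and $H^h(H^v(f))$ an isomorphism, i.e. $f\in\mathcal W$. For the opposite inclusion $\mathcal W\cap J_{\CE}\text{-inj}\subseteq I_{\CE}\text{-inj}$, the same long exact sequences run in reverse: a $J_{\CE}$-injective map that is additionally an $E^2$-equivalence must have $H^h(Z^v(f))$ an isomorphism, which is the only missing clause. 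For $J_{\CE}\text{-cell}\subseteq\mathcal W\cap I_{\CE}\text{-cof}$, note $J_{\CE}\text{-cell}\subseteq J_{\CE}\text{-cof}\subseteq I_{\CE}\text{-cof}$ (the latter from $I_{\CE}\text{-inj}\subseteq J_{\CE}\text{-inj}$), and separately that every map in $J_{\CE}$ is an $E^2$-equivalence — the objects $\partial_vD^{p,q}$ and $\partial_hD^{1,q}$ have vanishing vertical homology hence vanishing $E^2$, while $\partial_hD^{p,q}\mono D^{p,q}$ has acyclic vertical homology on both sides — and $\mathcal W$ is closed under pushouts and transfinite composition of such maps because vertical homology is exact on the relevant short exact sequences (the cokernels being the free bicomplexes $S^{0,q}$, $\partial_vD^{p,q}$, whose $H^v$ is projective, so the relevant $\mathrm{Tor}$ and $\mathrm{Ext}$ obstructions vanish and $H^h\circ H^v$ commutes with the colimits). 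The set-theoretic smallness conditions hold since $\bcplx$ is locally finitely presentable and all domains and codomains in $I_{\CE},J_{\CE}$ are finitely presentable.

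Finally I would establish the structural clause on cofibrations. As in the total case, a pushout of a map in $I_{\CE}$ adjoins free $k$-summands in a controlled pattern of bidegrees, so every $I_{\CE}$-cell complex — and hence every cofibration, being a retract of one — is a monomorphism $f$ whose cokernel is degreewise free; then inspecting the four cokernel types $S^{0,q}$, $\partial_vD^{p,q}$, $\partial_hD^{p,q}$, $D^{p,q}$ shows that $H^v$ of the cokernel is degreewise projective and $B^v$ of the cokernel is degreewise projective (indeed free), and since $H^v$ and $B^v$ are additive and preserve filtered colimits this passes to all cofibrations; left-exactness of $Z^v$ and the snake lemma give injectivity of $H^v(f)$. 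Combinatoriality is automatic from local presentability together with the cofibrant generation just established. The step I expect to be the main obstacle is the careful bookkeeping in the second paragraph: verifying that $\mathcal W$ is closed under the cellular constructions built from $J_{\CE}$, since unlike the $\tot$-case one cannot simply invoke that a single exact functor to $\cplx$ detects the weak equivalences — one must check that the iterated homology functor $H^h\circ H^v$ behaves well under the specific pushouts and transfinite compositions involved, which is where the projectivity of the $H^v$ of the relevant cokernels is genuinely used.
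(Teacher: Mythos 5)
Your overall strategy is the same as the paper's: identify $I_{\CE}\text{-inj}$ and $J_{\CE}\text{-inj}$ via Corollary \ref{cor:adjunctions} as the maps $f$ for which $Z^v_{*,q}(f)$ and $f_{*,q}$ are (trivial) fibrations in $\cplx_{\geq 0}$, verify the recognition-principle inclusions using the two short exact sequences $Z^v(X)\mono X\onto B^v(X)$ and $B^v(X)\mono Z^v(X)\onto H^v(X)$, and read off the cofibration clause from the cokernels of the generating cofibrations. Most of this is fine, and your treatment of $I_{\CE}\text{-inj}\subseteq\mathcal W\cap J_{\CE}\text{-inj}$ and of $J_{\CE}\text{-cell}\subseteq\mathcal W\cap I_{\CE}\text{-cof}$ matches the paper (modulo a small slip: $H^v(\partial_vD^{p,q})=\partial_vD^{p,q}\neq 0$ since its vertical differential is zero; it is only $H^h(H^v(\partial_vD^{p,q}))$ that vanishes, which is what you actually need).

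The genuine gap is the inclusion $\mathcal W\cap J_{\CE}\text{-inj}\subseteq I_{\CE}\text{-inj}$, which you dispose of with ``the same long exact sequences run in reverse.'' They do not. In the forward direction you know that $H^h(f)$ and $H^h(Z^v(f))$ are isomorphisms, and each of the two long exact sequences then yields one new isomorphism ($H^h(B^v(f))$, then $H^h(H^v(f))$) by a direct application of the Five Lemma. In the reverse direction you know $H^h(f)$ and $H^h(H^v(f))$ and must deduce $H^h(Z^v(f))$; but both exact sequences now contain \emph{two} unknown families, $H^h(Z^v(f))$ and $H^h(B^v(f))$, entangled with each other, so no single application of the Five Lemma closes the loop. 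The missing idea is an induction on the horizontal degree $p$, exploiting that bicomplexes vanish in negative horizontal degree and that the connecting maps shift $p$ by one: one proves simultaneously that $H^h_{n,q}(Z^v(f))$ and $H^h_{n,q}(B^v(f))$ are isomorphisms for $n<p$ and epimorphisms for $n=p$, and the inductive step chains through four separate segments of the two long exact sequences (first upgrading $H^h_{p,q}(B^v(f))$ to an isomorphism, then $H^h_{p,q}(Z^v(f))$, then establishing the epimorphism statements in degree $p+1$). Without this bookkeeping the argument is circular, and this step is precisely what distinguishes the Cartan--Eilenberg case from the total model structure, where a single exact functor to $\cplx$ detects everything. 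You should supply this induction explicitly.
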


\begin{proof}
We are going to follow a similar strategy to the proof of Theorem \ref{total_bicomplexes}.
	Weak equivalences in $\bcplx_{\CE}$ are obviously closed under retracts and transfinite compositions, and satisfy the $2$-out-of-$3$ property. 
	
	\bigskip
	Using Corollary \ref{cor:adjunctions} and the generating (trivial) cofibrations of $\cplx_{\geq 0}$, we see the following. A morphism $f: X \rightarrow Y$ in $\bcplx$ has the right lifting property with respect to all elements in $I_{\CE}$ if and only if $$Z^v_{*,q}(f): Z^v_{*,q}(X) \longrightarrow Z^v_{*,q}(Y),\qquad f_{*,q}: X_{*,q} \longrightarrow Y_{*,q},$$ are trivial fibrations in $\cplx_{\ge 0}$ for all $q$. Similarily, $f$ has the right lifting property with respect to all elements in $J_{\CE}$ if and only if $Z^v_{*,q}(f)$ is a fibration and $f_{*,q}$ is a trivial fibration, both in $\cplx_{\ge 0}$, for all $q$. Therefore, we obviously have $$I_{\CE}\mbox{-inj}\subseteq J_{\CE}\mbox{-inj}.$$ 
	
	We also have $I_{\CE}\mbox{-inj}\subseteq \mathcal{W}$. This follows by considering the obvious natural short exact sequences of complexes on $\cplx_{\geq 0}$, $q\in\mathbb Z$,
	\[Z^v_{*,q}(X)\mono X_{*,q}\onto B^v_{*,q-1}(X),\qquad
	B^v_{*,q}(X)\mono Z^v_{*,q}(X)\onto H^v_{*,q}(X).\]
	They show that if a map of bicomplexes $f\colon X\rightarrow Y$ induces quasi-isomorphisms on horizontal complexes and vertical cycles \[f_{*,q}\colon X_{*,q}\rightarrow Y_{*,q},\qquad Z^v_{*,q}(f)\colon Z^v_{*,q}(X)\rightarrow Z^v_{*,q}(Y),\] for all $q\in\mathbb Z$, then it also induces quasi-isomorphisms on vertical boundaries and vertical homology  \[B^v_{*,q}(f)\colon B^v_{*,q}(X)\rightarrow B^v_{*,q}(Y),\qquad H^v_{*,q}(f)\colon H^v_{*,q}(X)\rightarrow H^v_{*,q}(Y).\] Altogether,
	\[
	I_{\CE}\mbox{-inj}\subseteq J_{\CE}\mbox{-inj} \cap \mathcal{W}
	\]
	as required.

\bigskip

	It is not hard to see that any map in $J_{\CE}$, and hence any $J_{\CE}$-cell complex, is an $I_{\CE}$-cell complex, so it is in $I_{\CE}$-cof. Indeed, the two last factors of the union $J_{\CE}$ are also in $I_{\CE}$. Moreover, the maps $$0\mono S^{p-1,q-1}, \,\,\,p>0,\,\,\,q\in\mathbb Z,$$ are $I_{\CE}$-cell complexes since $S^{p-1,q-1}$ is the cokernel of the map $$S^{p-2,q-1}\mono\partial_vD^{p-1,q},$$ in $I_{\CE}$. Hence $0\mono\partial_vD^{p,q}$, which is the composite of $$0\mono S^{p-1,q-1} \,\,\,\mbox{and}\,\,\,S^{p-1,q-1}\mono\partial_vD^{p,q},$$ is also an $I_{\CE}$-cell complex.

	 We can also see that every $J_{\CE}$-cell complex is a weak equivalence: A push-out along one of the first two kinds of maps in $J_{\CE}$ adds up a copy of $\partial_vD^{p,q}$ or $\partial_hD^{1,q}$, which are weakly equivalent to $0$. Hence such a push-out is a weak equivalence. A push-out along one of the third kind adds a copy of $k$ to the modules of bidegrees $(p,q)$ and $(p,q-1)$, and $d_v$ maps identically the top copy to the bottom one. Hence it induces an isomorphism on $H^v$, in particular it is a weak equivalence. Since any transfinite composition of weak equivalences in this tentative model structure is a weak equivalence, we derive that any $J_{\CE}$-cell complex is a weak equivalence. Therefore,
	\[
	J_{\CE}\mbox{-cell} \subseteq \mathcal{W} \cap I_{\CE}\mbox{-cof}.
	\]

	\bigskip
	To complete the proof of this model structure's existence, we have to show that
	\[
		 J_{\CE}\mbox{-inj} \cap \mathcal{W} \subseteq I_{\CE}\mbox{-inj}.
	\]

	Assume that $f\colon X\to Y$ is a $J_{\CE}$-injective weak equivalence. 
	We want to show that
	\[
	H^h_{p,q}(f) \,\,\,\mbox{is an isomorphism for all $p$ and $q$}
	\]
and
\[
H^h_{p,q}(H^v(f)) \,\,\,\mbox{is an isomorphism for all $p$ and $q$}
\]	
implies that 
\[
H^h_{p,q}(Z^v(f)) \,\,\,\mbox{is an isomorphism for all $p$ and $q$}.
\]

\bigskip
	 	
	We prove by induction on $p$ that, for all $q\in\mathbb Z$, $H_{n,q}^h(Z^v(f))$ and $H_{n,q}^h(B^v(f))$ are isomorphisms for $n<p$ and epimorphisms for $n=p$. This will suffice. For this, we will use the long exact homology sequences associated to the two natural short exact sequences of horizontal complexes above.
	
	\bigskip
	The statement is obvious for $p=-1$ since our bicomplexes are trivial in negative horizontal degrees. 
	So next, assume the result true for $p$. 	
	The exact sequences of maps
	\[\underbrace{H^{h}_{p,q}(Z^v(f))}_{\text{epi}}\to \underbrace{H^{h}_{p,q}(f)}_{\text{iso}}\to H^{h}_{p,q-1}(B^v(f))\to \underbrace{H^{h}_{p-1,q}(Z^v(f))}_{\text{iso}}\to \underbrace{H^{h}_{p-1,q}(f)}_{\text{iso}}\]
	for $q\in\mathbb Z$ and the Five Lemma show that $H^{h}_{p,q}(B^v(f))$ is an isomorphism for all $q$. Now, the exact sequences
	\[\underbrace{H^{h}_{p+1,q}(H^v(f))}_{\text{iso}}\to \underbrace{H^{h}_{p,q}(B^v(f))}_{\text{iso}}\to H^{h}_{p,q}(Z^v(f))\to \underbrace{H^{h}_{p,q}(H^v(f))}_{\text{iso}} \to \underbrace{H^{h}_{p-1,q}(B^v(f))}_{\text{iso}}\]
	prove that $H^{h}_{p,q}(Z^v(f))$ is an isomorphism for $q\in\mathbb Z$ for the same reasons. Chasing
	\[\underbrace{H^{h}_{p+1,q}(f)}_{\text{iso}}\to H^{h}_{p+1,q-1} (B^v(f))\to 
		\underbrace{H^{h}_{p,q}(Z^v(f))}_{\text{iso}}\to \underbrace{H^{h}_{p,q}(f)}_{\text{iso}}\]
	we see that $H^{h}_{p+1,q} (B^v(f))$ is an epimorphism for all $q$, and chasing
	\[\underbrace{H^{h}_{p+1,q}(B^v(f))}_{\text{epi}}\to H^{h}_{p+1,q} (Z^v(f))\to 
	\underbrace{H^{h}_{p+1,q}(H^v(f))}_{\text{iso}}\to \underbrace{H^{h}_{p,q}(B^v(f))}_{\text{iso}}\]
	we deduce that $H^{h}_{p+1,q} (Z^v(f))$ is also an epimorphism for all $q$.
	
	So again we have checked the conditions for \cite[Theorem 2.1.19]{hovey_model_1999}, hence the model structure $\bcplx_{\CE}$ with the indicated weak equivalences and (trivial) fibrations exists.
	
	\bigskip
	Now let us take a look at the cofibrations.
		There are four kinds of generating cofibrations. A push-out along one of the first two kinds adds up a copy of $k$ of bidegree $(p,q-1)$, $p\geq 0$ with trivial vertical differential. As we remarked earlier, a push-out along one of the last two kinds adds a copy of $k$ of bidegrees $(p,q)$ and $(p,q-1)$, $p\geq 0$, and $d_v$ maps identically the top copy to the bottom one. It follows by induction that, for any $I_{\CE}$-cofibration $f$, both $f$ and $H^v(f)$ are injective and the cokernels of $B^v(f)$ and $H^v(f)$ are pointwise projective. 
		
	\end{proof}

		Given a chain complex $Y$ regarded as a bicomplex concentrated in the horizontal degree $0$, any cofibrant resolution $ Y^{cof} \onto Y$ in the Cartan--Eilenberg model structure $\bcplx_{\CE}$ is a projective resolution in the sense of \cite[\S XVII.1]{cartan_homological_1956}, hence the name of the model structure. It is also what Sagave calls a ``$k$-projective $E_1$-resolution'' in  \cite{sagave_dg-algebras_2010}.
	
	\bigskip
	A bicomplex $X$ is fibrant in $\bcplx_{\CE}$ if and only if its horizontal homology is trivial $H^h(X)=0$. It is trivially fibrant if in addition $H^h(Z^v(X))=0$.
	
	\bigskip
	It is possible to check with a certain amount of work that the model category $\bcplx_{\CE}$ is proper. It is not abelian since, for $p>0$, the projection $D^{p,q}\onto \partial_vD^{p,q+1}$ onto the cokernel of $\partial_vD^{p,q}\mono D^{p,q}$ has a fibrant kernel $\partial_vD^{p,q}$, but it is not a fibration as it is not surjective on $Z^v_{p,q}$.
	Nevertheless, it could be compatible with a restricted family of short exact sequences in the sense of \cite{hovey_cotorsion_2002}.
	
	\bigskip
Again, this model structure is well-behaved with regards to the tensor product.	

\begin{proposition}
The model category $\bcplx_{\CE}$ is monoidal with cofibrant tensor unit and satisfies the monoid axiom. 
\end{proposition}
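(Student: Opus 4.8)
The plan is to follow the scheme of Proposition~\ref{prop:monoidal}, with one difference: since $\bcplx_{\CE}$ is not abelian we cannot appeal to \cite[Theorem 4.2]{hovey_cotorsion_2007}, so instead we verify the pushout--product axiom directly on the generating sets via \cite[Corollary 4.2.5]{hovey_model_1999}, and establish the monoid axiom by a cellular argument. Cofibrancy of the unit is immediate: the tensor unit is $S^{0,0}$, and $0\mono S^{0,0}$ is one of the generating cofibrations in $I_{\CE}$.

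For the pushout--product axiom one must show that $f\mathbin{\square}g$ is a cofibration whenever $f,g\in I_{\CE}$ and a trivial cofibration whenever one of them lies in $J_{\CE}$. All sources, targets and cokernels of maps in $I_{\CE}\cup J_{\CE}$ have pointwise free underlying modules, so $f\mathbin{\square}g$ is a pointwise split monomorphism with cokernel $\operatorname{coker}(f)\otimes\operatorname{coker}(g)$; the cokernels in play are the bicomplexes $S^{p,q}$, $\partial_hD^{1,q}$, $\partial_vD^{p,q}$ and $C^{p,q}:=D^{p,q}/\partial_hD^{p,q}$. The first task is to tabulate their pairwise tensor products: tensoring with $S^{0,q}$ is a vertical shift, and --- extending the formulas in the proof of Proposition~\ref{prop:monoidal} --- each such tensor product splits as a finite direct sum of bicomplexes again built from $S^{p,q}$, $\partial_hD^{1,q}$, $\partial_vD^{p,q}$, $C^{p,q}$ and $D^{p,q}$ (the last being itself a finite $I_{\CE}$-cell complex). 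Tracing these splittings through the defining pushouts, one identifies $f\mathbin{\square}g$, after a suitable change of basis of its source and target, with a finite direct sum of maps in $I_{\CE}$ and identity maps; such a map is a finite relative $I_{\CE}$-cell complex, hence a cofibration. When one of $f,g$ lies in $J_{\CE}$, the pushout--product is either $f\otimes\operatorname{id}_{\partial_vD^{p,q}}$ (when that factor is $0\mono\partial_vD^{p,q}$), or a monomorphism with cokernel $\operatorname{coker}(f)\otimes\operatorname{coker}(g)$ where $\operatorname{coker}(g)$ is one of the vertically contractible blocks $\partial_hD^{1,q}$, $C^{p,q}$; using that $\partial_vD^{p,q}$ is horizontally a contractible complex of free $k$-modules --- so $H^h(H^v(-\otimes\partial_vD^{p,q}))=0$ --- and that $C^{p,q}$, $\partial_hD^{1,q}$ are vertically contractible with $k$-split homotopy --- so $-\otimes C^{p,q}$ and $-\otimes\partial_hD^{1,q}$ are vertically acyclic --- one sees in either case that $f\mathbin{\square}g$ is a weak equivalence, hence a trivial cofibration.

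For the monoid axiom \cite[Definition 3.3]{schwede_algebras_2000} it suffices to show every map in $(J_{\CE}\otimes\bcplx)\mbox{-cell}$ is a weak equivalence. Such a map is a transfinite composite $W_0\to W$ of pushouts of maps $j\otimes\operatorname{id}_X$ with $j\in J_{\CE}$ and $X$ an arbitrary bicomplex. For $j=(\partial_hD^{p,q}\mono D^{p,q})$ or $j=(0\mono\partial_hD^{1,q})$ the cokernel of $j\otimes\operatorname{id}_X$ is $C^{p,q}\otimes X$ or $\partial_hD^{1,q}\otimes X$, which is vertically contractible because $C^{p,q}$ and $\partial_hD^{1,q}$ are vertically contractible with $k$-split homotopy, so every pushout of such a map is a monomorphism with vertically acyclic cokernel. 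For $j=(0\mono\partial_vD^{p,q})$ every pushout of $j\otimes\operatorname{id}_X$ is a split inclusion $W'\mono W'\oplus(\partial_vD^{p,q}\otimes X)$, and since $d_v$ vanishes on $\partial_vD^{p,q}$ we have $H^v(\partial_vD^{p,q}\otimes X)\cong\partial_vD^{p,q}\otimes H^v(X)$, which is horizontally contractible as $\partial_vD^{p,q}$ is, horizontally, a contractible complex of free modules. Hence each step of $W_0\to W$ is either a monomorphism with vertically acyclic cokernel, or a split inclusion of a horizontally contractible direct summand. Applying $H^v$ turns the first kind into isomorphisms (by the long exact sequence in vertical homology) and keeps the second kind a split inclusion with horizontally contractible complement; since $H^v$ commutes with the transfinite colimit, $H^v(W_0)\to H^v(W)$ is the inclusion of a direct summand with horizontally contractible complement, so it induces an isomorphism on $H^h$, i.e.~$W_0\to W$ is an $E^2$-equivalence.

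The bookkeeping in the pushout--product table is lengthy but routine, the pattern being exactly that of Proposition~\ref{prop:monoidal} enlarged by the $\partial_hD^{1,q}$- and $C^{p,q}$-type factors. The genuinely delicate point is the monoid axiom: one must resist arguing merely that the cokernels occurring there are $E^2$-acyclic, since that alone does not force an $E^2$-equivalence; the argument really rests on the finer dichotomy above --- vertically acyclic cokernels versus horizontally contractible direct summands --- together with the exactness of $H^v$ on short exact sequences of bicomplexes.
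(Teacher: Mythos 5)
Your argument is correct, but it takes a genuinely different route from the paper's on the pushout--product axiom, and a finer one on the monoid axiom. For monoidality the paper avoids your case-by-case tensor table by introducing functors $z_q,c_q\colon\cplx_{\geq0}\to\bcplx$ that place a complex in one, respectively two adjacent, vertical degrees: these send the generators of $\cplx_{\geq0}$ to (trivial) generators of $\bcplx_{\CE}$, satisfy $z_p(X)\otimes z_q(Y)\cong z_{p+q}(X\otimes Y)$ and $\partial_hD^{1,t}\otimes z_q(f)\cong c_{t+q}(f)$, and together with the identity $(\partial_hD^{p,q}\mono D^{p,q})=\partial_hD^{1,1}\otimes(S^{p-1,q-1}\mono\partial_vD^{p,q})$ they reduce the whole verification to monoidality of $\cplx_{\geq0}$. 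Your direct identification of $f\mathbin{\square}g$ with a finite relative $I_{\CE}$-cell complex is sound in principle, but as written it is only a claim (``lengthy but routine''); if you keep this route you should exhibit the change of basis at least for $(S^{p-1,q-1}\mono\partial_vD^{p,q})\mathbin{\square}(S^{s-1,t-1}\mono\partial_vD^{s,t})$, whereas the paper's functorial bookkeeping makes this unnecessary. On the monoid axiom your extra care is warranted and is the strongest part of your write-up: a monomorphism of bicomplexes with $E^2$-acyclic cokernel is \emph{not} in general an $E^2$-equivalence (the connecting map on $H^v$ can kill $E^2$-classes), so one really does need your dichotomy --- cokernels with vanishing $H^v$ for the $\partial_hD^{1,q}$- and $C^{p,q}$-type generators, versus genuine direct-sum inclusions whose complement has horizontally contractible $H^v$ for $0\mono\partial_vD^{p,q}$. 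The paper reaches the same conclusion via the computations $H^v(Y\otimes\partial_hD^{p,q})=0$ and $H^h(H^v(Y)\otimes\partial_vD^{p,q})=0$, but phrases the reduction more coarsely; your version makes explicit why those two computations suffice for the two respective kinds of pushouts.
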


\begin{proof}

	For $q\in\mathbb Z$, we have functors $$z_q,c_q\colon \cplx_{\geq 0}\to\bcplx$$ defined as follows. Given an object $X$ in the source, the bicomplex $z_q(X)$ is $X$ concentrated in vertical degree $q$ and zero elsewhere. The complex $c_q(X)$ is obtained by placing $X$ in vertical degrees $q$ and $q-1$ and taking the vertical differential $d_v$ from bidegree $(p,q)$ to $(p,q-1)$ to be $(-1)^p$. (The sign is needed to get anticommuting differentials.)
	
	\bigskip
	We have
\[
	\begin{array}{rl}
	z_{q-1}(D^p)=\partial_vD^{p,q}, & z_q (S^p)=S^{p,q}, \\
	c_q(D^p)=D^{p,q}, & c_q(S^{p-1})=\partial_h D^{p,q}.
	\end{array}
	\]
	
	Therefore, $z_q$ sends the elements of $I_{\cplx_{\ge 0}}$ to elements of $I_{\CE}$ and $J_{\cplx_{\ge 0}}$ to $J_{\CE}$. This implies that the $z_q$ preserve cofibrations and trivial cofibrations.
	Likewise, the $c_q$ send cofibrations in $\cplx_{\ge 0}$ to trivial cofibrations in $\bcplx_{\CE}$.

	\bigskip
	The model category $\cplx_{\geq 0}$ is monoidal, and we have natural isomorphisms $$z_p(X)\otimes z_q(Y)\cong z_{p+q}(X\otimes Y).$$ Thus, we see that the push-out product of two cofibrations concentrated a single (possibly different) vertical degree is a cofibration in $\bcplx_{\CE}$, which is trivial if one of the initial cofibrations was. 
	
	Moreover, if $f$ is a cofibration in $\cplx_{\geq0}$ and $U$ is a trivially cofibrant object in $\cplx$ regarded as a bicomplex concentrated in horizontal degree $0$, then $U\otimes z_q(f)$ is a trivial cofibration in $\bcplx_{\CE}$. Indeed, the complex $U$, being trivially cofibrant, is a retract of a direct sum of copies of $D^t$, $t\in\mathbb Z$, i.e.~$U$ regarded as a bicomplex is a retract of a direct sum of copies of $\partial_hD^{1,t}$, $t\in\mathbb Z$. Moreover, $$\partial_hD^{1,t}\otimes z_q(f)\cong c_{t+q}(f),$$ hence $U\otimes z_q(f)$ is retract of a direct sum factors of the form $c_t(f)$, $t\in\mathbb Z$, which are trivial cofibrations. Recall also that trivially cofibrant objects in $\cplx$ are closed under tensor products since the model category $\cplx$ is monoidal.
	
	If we combine the previous observations with the fact that the map $\partial_hD^{p,q}\mono D^{p,q}$ is the same as $$\partial_hD^{1,1}\otimes(S^{p-1,q-1}\mono\partial_vD^{p,q}),$$ we conclude that the push-out product of two maps in $I_{\CE}$ is a cofibration, and that the push-out product of a map in $I_{\CE}$ and a map in $J_{\CE}$ is a trivial cofibration. Hence $\bcplx_{\CE}$ is monoidal.

\bigskip

	Let us now consider the monoid axiom \cite[Definition 3.3]{schwede_algebras_2000}. Since Cartan--Eilenberg equivalences are closed under transfinite compositions, it suffices to prove that, given two bicomplexes $X$ and $Y$, the push-out of $X$ along a map $f$ in $Y\otimes J_{\CE}$ is a weak equivalence. All maps in $J_{\CE}$ are pointwise split monomorphisms, hence maps in $Y\otimes J_{\CE}$ are injective and therefore the push-out along a map $f$ in $Y\otimes J_{\CE}$ is a weak equivalence if and only if the cokernel of $f$ is acyclic. The cokernels of maps in $Y\otimes J_{\CE}$ are of the form $Y\otimes\partial_vD^{p,q}$ and $Y\otimes\partial_hD^{p,q}$, $p> 0$, $q\in\mathbb Z$. They are acyclic in $\bcplx_{\CE}$ because
	\begin{align*}
		H^h(H^v(Y\otimes\partial_vD^{p,q}))&=H^h(H^v(Y)\otimes\partial_vD^{p,q})=0,\\
		H^v(Y\otimes\partial_hD^{p,q})&=0.
	\end{align*}
	This concludes the verification of the monoid axiom. 
	
	The tensor unit $S^{0,0}$ is cofibrant since $0\mono S^{0,0}$ is one of the generating cofibrations.

\end{proof}

\section{Twisted complexes and their total model structure}\label{sec:twisted}

In this section, we generalise the total model structure on $\bcplx$ to the category of twisted complexes. The notion of twisted complex goes back to Wall \cite{Wall_1961}. They have proven useful in many contexts in the construction of small resolutions.

\begin{definition}\label{twisted_complexes}
	A \emph{twisted complex} $X$, also known as \emph{multicomplex}, is a bigraded module $X=\{X_{p,q}\}_{p,q\in\mathbb Z}$ with $X_{p,q}=0$ for $p<0$ equipped with maps 
	\[d_i\colon X_{p,q}\To X_{p-i,q+i-1},\quad i\geq 0,\]
	satisfying
	\[\sum_{i+j=n} d_id_j=0,\quad n\geq 0.\]
	Abusing terminology, we also call the maps $d_i$ differentials, despite they do not square to zero in general.
	
	\begin{center}
		\begin{tikzpicture}[execute at begin node=$\scriptstyle , execute at end node=$]
		\tikzstyle{every node}=[circle, fill, inner sep=1.2]
		\draw[->, thick, opacity =.2] (0,0) -- (3.5,0);
		\draw[->, thick, opacity =.2] (0,-1) -- (0,2.5);
		\draw[thick, opacity =.2] (0,2) -- (3,-1);
		\draw[thick, opacity =.2] (0,1.5) -- (2.5,-1);
		\node at (2,0) (A) {};
		\node[label=below:{d_0}] at (2,-.5) (B) {};
		\node[label=left:{d_1}] at (1.5,0) (C) {};
		\node[label=left:{d_2}] at (1,.5) (D) {};
		\node[label=left:{d_3}] at (.5,1) (E) {};
		\node[label=left:{d_4}] at (0,1.5) (F) {};
		\draw[->] (A) -- (B);
		\draw[->] (A) -- (C);
		\draw[->] (A) -- (D);
		\draw[->] (A) -- (E);
		\draw[->] (A) -- (F);
		\end{tikzpicture}
	\end{center}
	
	 A morphism of bicomplexes $f\colon X\to Y$ is a family of maps $f_{p,q}\colon X_{p,q}\to Y_{p,q}$ compatible with the differentials. We denote the category of twisted complexes by $\tcplx$. \end{definition}

\begin{definition}
We define the \emph{total complex} of the twisted complex $X$ as the chain complex which in degree $n$ is $$\tot(X)_n = \bigoplus\limits_{p+q=n} X_{p,q}.$$ 
The differential in $\tot(X)$ is then
	\[d=\sum_{i\geq 0}d_i.\]
	\emph{Totalisation} defines an functor
	\[\tot\colon \tcplx\To\cplx.\]
\end{definition}

	Note that the sum in the differential is finite on each bidegree $(p,q)$ since the target of $d_i$ is trivial for $i>p$. 
We see that the differential on $\tot(X)$ is compatible with the filtration by the horizontal degree considered in Remark \ref{spectral_sequence}. Moreover, any such differential in $\tot(X)$ comes from a unique twisted complex structure on $X$.

\bigskip
The category $\tcplx$ is clearly abelian and locally finitely presentable. Limits and colimits are computed pointwise. The totalisation functor is exact and preserves (co)limits.

\begin{remark}\label{twisted_properties}
	A bicomplex is the same as a twisted complex with $d_i=0$ for $i\geq 2$. The horizontal and vertical differentials are $d_h=d_1$ and $d_v=d_0$. This observation defines a fully faithful functor $\bcplx\to\tcplx$ which has a left adjoint $\tcplx\to\bcplx$ sending $X$ to
	\[X\Big\slash\Big(\sum_{i\geq 2}d_i(X)\Big).\]
	
	The first equations defining a general twisted complex $X$ are
	\begin{align*}
		d_0^2&=0,\\
		d_0d_1+d_1d_0&=0,\\
		d_0d_2+d_1^2+d_2d_0&=0.
	\end{align*}
	Hence $d_0$, which points vertically downwards, is always a differential, so we can define the \emph{vertical cycles} $Z^v(X)$ and \emph{vertical homology} $H^v(X)$. Moreover, $d_1$ induces a differential on $H^v(X)$ ponting horizontally to the left, hence we can define its \emph{horizontal homology} $H^h(H^v(X))$. As in Remark \ref{spectral_sequence}, this is the second term of the spectral sequence of the filtered complex $\tot(X)$, converging strongly to its homology,
	\[E^2_{p,q}=H^h_{p,q}(H^v(X))\Longrightarrow H_{p+q}(\tot(X)).\]
\end{remark}

\begin{definition}\label{twisted_disk}
	We define the \emph{twisted $(p,q)$-disc} $\tilde{D}^{p,q}$, $p\geq 0$, $q\in\mathbb Z$, as the twisted complex freely generated by a single element $x_{p,q}\in \tilde{D}^{p,q}_{p,q}$.
	More precisely, as a $k$-module $\tilde{D}^{p,q}_{p,q}=k$ generated by $x_{p,q}$, and for $0\leq s\leq p$ and $n\geq 1$, $\tilde{D}^{p,q}_{p-s,q+s-n}$ is the quotient of the free module generated by
	\[\{d_{i_1}\cdots d_{i_n}(x_{p,q})\}_{i_1+\cdots +i_n=s}\]
	by the relations
	\[
	\bigg\{\sum_{i_{j}+i_{j+1}=m}d_{i_1}\cdots d_{i_{j}}d_{i_{j+1}}\cdots d_{i_n}(x_{p,q})\bigg\}_{\substack{1\leq j<n\\i_1+\cdots+i_{j-1}+m+i_{j+2}+\cdots+i_n=s}}
	\]
	Elsewhere, $\tilde{D}^{p,q}$ is trivial. These relations provide a way of taking any $d_0$ in $d_{i_1}\cdots d_{i_n}(x_{p,q})$ to the right. In particular, $d_{i_1}\cdots d_{i_n}(x_{p,q})$ is trivial as long as there are two $0$ subscripts, since $d_0^2=0$. Therefore $\tilde{D}^{p,q}_{s,t}=0$ if $t<q-1$. Moreover, we can take any word $d_{i_1}\cdots d_{i_n}(x_{p,q})$ with exactly one $d_0$ in it and uniquely rewrite it as the sum of words without any $d_0$ and a word with exactly one $d_0$ at the right. More precisely, if the only trivial subscript is $i_j=0$ then
	\begin{multline*}
	d_{i_1}\cdots d_{i_{j-1}} d_0 d_{i_{j+1}} \cdots d_{i_n} (x_{p,q}) ={} (-1)^{n-j} d_{i_1}\cdots d_{i_{j-1}} d_{i_{j+1}} \cdots d_{i_n}d_0(x_{p,q})  \\ +
	\sum_{u=1}^{n-j}\sum\limits_{\substack{s+t=i_{j+u} \\ s, t > 0}} (-1)^ud_{i_1}\cdots d_{i_{j-1}} 
	\underbrace{d_{i_{j+1}}\cdots d_{i_{j+u-1}}}_{u-1\text{ factors}}
	d_{s} d_t  \underbrace{d_{i_{j+2}}d_{i_{j+u+1}}\cdots d_{i_n}(x_{p,q})}_{n-j-u\text{ factors}}.
	\end{multline*}
	One can straightforwardly check that the rewriting process consisting of replacing the word on the left with the sum on the right, removing words $d_{i_1}\cdots d_{i_n}(x_{p,q})$ with two $0$ subscripts, and not changing words with no $0$ subscript, sends the defining relations of $\tilde{D}^{p,q}$ to $0$.	Consequently, for $0\leq s\leq p$ and $1\leq n\leq s+1$, $\tilde{D}^{p,q}_{p-s,q+s-n}$ is freely generated by
	\[
	\{d_{i_1}\cdots d_{i_n}(x_{p,q})\}_{\substack{i_1,\dots,i_n>0\\i_1+\cdots +i_n=s}}
	\cup
	\{d_{i_1}\cdots d_{i_{n-1}}d_0(x_{p,q})\}_{\substack{i_1,\dots,i_{n-1}>0\\i_1+\cdots +i_{n-1}=s}}.
	\]
	Hence, the rank of this $\tilde{D}^{p,q}_{p-s,q+s-n}$ is $\binom{s-1}{n-1}+\binom{s-1}{n-2}$ if $1<n\leq s$, and $1$ if $n=1$ or $n=s+1$. The following picture gives a rough idea of how $\tilde{D}^{4,0}$ looks like
	\begin{center}
		\begin{tikzpicture}[execute at begin node=$\scriptstyle , execute at end node=$, scale=2]
		\tikzstyle{every node}=[circle, draw, inner sep=1]
		\draw[->, thick, opacity =.2] (0,0) -- (3.5,0);
		\draw[->, thick, opacity =.2] (0,-1) -- (0,2.5);
		\draw[thick, opacity =.2] (0,2) -- (3,-1);
		\draw[thick, opacity =.2] (0,1.5) -- (2.5,-1);
		\draw[thick, opacity =.2] (0,1) -- (2,-1);
		\draw[thick, opacity =.2] (0,.5) -- (1.5,-1);
		\draw[thick, opacity =.2] (0,0) -- (1,-1);
		\draw[thick, opacity =.2] (0,-.5) -- (.5,-1);
		\node at (2,0) (A) {1};
		\node at (2,-.5) (B) {1};
		\node at (1.5,0) (C) {1};
		\node at (1,.5) (D) {1};
		\node at (.5,1) (E) {1};
		\node at (0,1.5) (F) {1};
		\draw[->] (A) -- (B);
		\draw[->] (A) -- (C);
		\draw[->] (A) -- (D);
		\draw[->] (A) -- (E);
		\draw[->] (A) -- (F);
		\node at (1.5,-.5) (G) {1};
		\node at (1,0) (H) {2};
		\node at (.5,.5) (I) {3};
		\node at (0,1) (J) {4};
		\draw[->] (B) -- (G);
		\draw[->] (B) -- (H);
		\draw[->] (B) -- (I);
		\draw[->] (B) -- (J);
		\draw[->] (C) -- (G);
		\draw[->] (C) -- (H);
		\draw[->] (C) -- (I);
		\draw[->] (C) -- (J);
		\draw[->] (D) -- (H);
		\draw[->] (D) -- (I);
		\draw[->] (D) -- (J);
		\draw[->] (E) -- (I);
		\draw[->] (E) -- (J);
		\draw[->] (F) -- (J);
		\node at (1,-.5) (K) {1};
		\node at (.5,0) (L) {3};
		\node at (0,.5) (M) {6};
		\draw[->] (G) -- (K);
		\draw[->] (G) -- (L);
		\draw[->] (G) -- (M);
		\draw[->] (H) -- (K);
		\draw[->] (H) -- (L);
		\draw[->] (H) -- (M);
		\draw[->] (I) -- (L);
		\draw[->] (I) -- (M);
		\draw[->] (J) -- (M);
		\node at (.5,-.5) (N) {1};
		\node at (0,0) (O) {4};
		\draw[->] (K) -- (N);
		\draw[->] (K) -- (O);
		\draw[->] (L) -- (N);
		\draw[->] (L) -- (O);
		\draw[->] (M) -- (O);
		\node at (0,-.5) (P) {1};
		\draw[->] (N) -- (P);
		\draw[->] (O) -- (P);
		\end{tikzpicture}
	\end{center}
	The nodes indicate the rank of the non-trivial parts of the underlying bigraded module. The non-trivial arrows are also depicted.
\end{definition}

\begin{remark}\label{basis_alternative}
One can define an analogous algorithm to move the $d_0$ in a word to the left. 
	Therefore,
		\[
		\{d_{i_1}\cdots d_{i_n}(x_{p,q})\}_{\substack{i_1,\dots,i_n>0\\i_1+\cdots +i_n=s}}
		\cup
		\{d_0d_{i_1}\cdots d_{i_{n-1}}(x_{p,q})\}_{\substack{i_1,\dots,i_{n-1}>0\\i_1+\cdots +i_{n-1}=s}}
		\]
	is an alternative basis of $\tilde{D}^{p,q}_{p-s,q+s-n}$ for $0\leq n-1\leq s\leq p$.
\end{remark}

For any $k$-module $A$, we can easily define $\tilde{D}^{p,q}(A)$ by degreewise tensoring with $A$, which gives an analogous construction with copies of $A$ instead of copies of $k$ at each node. By definition, this gives us an adjoint functor pair
\[
\tilde{D}^{p,q}: k\mbox{-mod} \lradjunction \tcplx: ev_{p,q},
\]
where $ev_{p,q}(X)=X_{p,q}$. 
In particular, for any twisted complex $X$, we have a natural isomorphism \[\tcplx(\tilde{D}^{p,q},X)=X_{p,q}.\]

\begin{lemma}\label{twisted_disk_trivial}
	Twisted disks have trivial total homology, $H_*(\tot(\tilde{D}^{p,q}))=0$.
\end{lemma}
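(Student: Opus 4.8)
The plan is to exhibit an explicit contracting homotopy on $\tot(\tilde D^{p,q})$, or equivalently to identify $\tilde D^{p,q}$ as a contractible object built from the free generator $x_{p,q}$. Recall from Definition \ref{twisted_disk} and Remark \ref{basis_alternative} that $\tilde D^{p,q}$ has a basis for $\tilde D^{p,q}_{p-s,q+s-n}$ given by words $d_{i_1}\cdots d_{i_n}(x_{p,q})$ with all $i_j>0$ together with words $d_0 d_{i_1}\cdots d_{i_{n-1}}(x_{p,q})$ with all $i_j>0$. The key observation is that in the total complex the full differential is $d=\sum_{i\ge 0}d_i$, and on the basis element $x_{p,q}$ itself we have $d(x_{p,q})=\sum_{i\ge 0}d_i(x_{p,q})$, which involves both the ``$d_0$ word'' $d_0(x_{p,q})$ and the ``positive words'' $d_i(x_{p,q})$, $i>0$. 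So $x_{p,q}$ is a total cycle only if $p=q=$ trivial, and in general $d(x_{p,q})\ne 0$.

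First I would set up the right homotopy. Define $h\colon\tot_n(\tilde D^{p,q})\to\tot_{n+1}(\tilde D^{p,q})$ on the alternative basis from Remark \ref{basis_alternative} by sending a positive word $d_{i_1}\cdots d_{i_n}(x_{p,q})$ (all $i_j>0$) to zero and sending a word $d_0 d_{i_1}\cdots d_{i_{n-1}}(x_{p,q})$ (all $i_j>0$) to the positive word $d_{i_1}\cdots d_{i_{n-1}}(x_{p,q})$; also send $x_{p,q}$ to $0$. Morally, $h$ strips off a leading $d_0$. Then I would compute $dh+hd$ on each basis element and check it equals the identity (or $\pm$ identity with an appropriate sign normalisation, which one can absorb by adjusting signs in $h$ using the total degree). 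The point is that $d=d_0+(d_{\ge 1})$, where $d_0$ creates a leading $d_0$ and $d_{\ge 1}=\sum_{i\ge 1}d_i$ only produces positive words; the defining relations of $\tilde D^{p,q}$ are precisely what is needed so that $hd_{\ge 1}$ and $d_{\ge 1}h$ interact correctly with $hd_0$ and $d_0 h$, telescoping to the identity. Concretely, on a positive word $w=d_{i_1}\cdots d_{i_n}(x_{p,q})$: $h(w)=0$, and $h(d(w))=h(d_0 w+d_{\ge1}w)=h(d_0 w)=w$ (since $d_0 w$ is a $d_0$-word and $d_{\ge 1}w$ is a sum of positive words killed by $h$), so $(dh+hd)(w)=w$. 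On a $d_0$-word $w=d_0 v$ with $v$ positive: $h(w)=v$, so $dh(w)=dv=d_0 v+d_{\ge1}v=w+d_{\ge1}v$, while $hd(w)$ must be computed using the rewriting rule of Definition \ref{twisted_disk} applied to $d\,d_0 v$ — here the relations $\sum_{i+j=n}d_id_j=0$ guarantee that $d d_0 v$ rewrites as a sum of $d_0$-words plus positive words whose $h$-image cancels the extra term $d_{\ge 1}v$, giving $(dh+hd)(w)=w$. On $x_{p,q}$: $h(x_{p,q})=0$ and $hd(x_{p,q})=h(d_0 x_{p,q}+\sum_{i\ge1}d_i x_{p,q})=x_{p,q}$, as required.

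The main obstacle will be the bookkeeping in the $d_0$-word case: verifying that the higher relations $\sum_{i+j=n}d_id_j=0$ for $n\ge 2$ (not just $d_0^2=0$) conspire so that $hd(d_0 v)$ produces exactly $d_{\ge 1}v$ as its positive-word part, with the correct signs. This is where one genuinely uses the twisted-complex axioms rather than just the bicomplex relation. I expect this to reduce to the explicit rewriting formula already displayed in Definition \ref{twisted_disk} (moving a single $d_0$ to the right), applied term by term; signs are controlled by the parity of word length, i.e. by the total degree, so I would normalise $h$ by a sign $(-1)^{|w|}$ or similar to make $dh+hd=\mathrm{id}$ on the nose. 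Once the homotopy identity $dh+hd=\mathrm{id}_{\tot(\tilde D^{p,q})}$ is established, $\tot(\tilde D^{p,q})$ is contractible, hence $H_*(\tot(\tilde D^{p,q}))=0$, proving the lemma.

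Alternatively — and this may be cleaner to write — one can argue by a filtration/spectral-sequence comparison: by Remark \ref{twisted_properties} the total complex of $\tilde D^{p,q}$ carries the horizontal filtration with $E^2_{s,t}=H^h_{s,t}(H^v(\tilde D^{p,q}))$. One first computes $H^v(\tilde D^{p,q})=H_*(\tilde D^{p,q},d_0)$: since $d_0$ acts on the basis by $d_0\mapsto$ prepending $d_0$ (modulo the rewriting), the vertical complex in each horizontal degree splits as a sum of two-term complexes $k\xrightarrow{\cong}k$ (pairing each positive word with the corresponding $d_0$-word) plus the class of the top generator, so $H^v(\tilde D^{p,q})$ is concentrated appropriately and then $d_1$ kills what remains, forcing $E^2=0$; strong convergence (Remark \ref{twisted_properties}) then gives $H_*(\tot(\tilde D^{p,q}))=0$. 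I would pick whichever of these two routes produces the shorter verification, but I lean toward the explicit homotopy since it is self-contained and the homotopy $h$ is natural enough to be reused later; the spectral sequence route has the advantage of needing no sign juggling.
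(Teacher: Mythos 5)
Your proposal is correct, and your second (spectral-sequence) route is exactly the paper's proof: the alternative basis of Remark \ref{basis_alternative} pairs each positive word $w$ (including $x_{p,q}$ itself) with $d_0w$, so the vertical complex is a direct sum of two-term complexes $k\xrightarrow{1}k$ and $H^v(\tilde{D}^{p,q})=0$ outright --- note there is no leftover ``class of the top generator'' and no need to invoke $d_1$, since $x_{p,q}$ is cancelled by $d_0(x_{p,q})$; strong convergence of the spectral sequence of Remark \ref{twisted_properties} then finishes the argument. Your preferred first route --- a contracting homotopy $h$ for the \emph{total} differential --- is genuinely different and also works; in fact your sign worries are unfounded: with $h(w)=0$ on positive words and $h(d_0v)=v$, the relation $\sum_{a+b=n}d_ad_b=0$ gives $d_nd_0v=-d_0d_nv-\sum_{a+b=n,\,a,b\geq 1}d_ad_bv$, hence $h\bigl(\sum_{n\geq1}d_nd_0v\bigr)=-\sum_{n\geq1}d_nv$, which exactly cancels the unwanted term $\sum_{n\geq1}d_nv$ in $dh(d_0v)=dv$, so $dh+hd=\mathrm{id}$ on the nose with no sign normalisation. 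The homotopy argument proves more (contractibility of $\tot(\tilde{D}^{p,q})$, not merely acyclicity) and is self-contained, at the cost of the rewriting bookkeeping; the paper's argument is shorter because vertical contractibility is immediate from the basis and the spectral sequence is already in place for the rest of the section.
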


\begin{proof}
	Using the bases in Remark \ref{basis_alternative}, the vertical homology $d_0$ applied to a basis element is either zero or of the form $d_0d_{i_1}\cdots d_{i_{n-1}}(x_{p,q})$. 
	Thus, their vertical homology vanishes, $H^v(\tilde{D}^{p,q})=0$, since $\tilde{D}^{p,q}$ is vertically contractible. Using the spectral sequence in Remark \ref{twisted_properties}, this implies that the homology of the total complex is also trivial. 
\end{proof}

\begin{definition}\label{twisted_boundary}
	We define the \emph{vertical boundary} $\partial_v\tilde{D}^{p,q}$ of the twisted $(p,q)$-disc, $p\geq 0$, $q\in\mathbb Z$, as the twisted complex freely generated by a single element $y_{p,q-1}\in \partial_v\tilde{D}^{p,q}_{p,q-1}$ satisfying $d_v(y_{p,q-1})=0$ (i.e.~a vertical cycle).
	
	Arguing as in Definition \ref{twisted_disk}, $\partial_v\tilde{D}^{p,q}_{p,q-1}=k$ generated by $y_{p,q-1}$, and for $0\leq s\leq p$ and $1\leq n\leq s$, $\partial_v\tilde{D}^{p,q}_{p-s,q-1+s-n}$ is freely generated by
	\[
	\{d_{i_1}\cdots d_{i_n}(y_{p,q-1})\}_{\substack{i_1,\dots,i_n>0\\i_1+\cdots +i_n=s}}.
	\]
	Hence its rank is $\binom{s-1}{n-1}$.
	Elsewhere, $\partial_v\tilde{D}^{p,q}$ is trivial. 	 
	
	We depict $\partial_v\tilde{D}^{4,0}$ with the same conventions as in Definition \ref{twisted_disk},
	\begin{center}
		\begin{tikzpicture}[execute at begin node=$\scriptstyle , execute at end node=$, scale=2]
		\tikzstyle{every node}=[circle, draw, inner sep=1]
		\draw[->, thick, opacity =.2] (0,0) -- (3.5,0);
		\draw[->, thick, opacity =.2] (0,-1) -- (0,2.5);
		\draw[thick, opacity =.2] (0,2) -- (3,-1);
		\draw[thick, opacity =.2] (0,1.5) -- (2.5,-1);
		\draw[thick, opacity =.2] (0,1) -- (2,-1);
		\draw[thick, opacity =.2] (0,.5) -- (1.5,-1);
		\draw[thick, opacity =.2] (0,0) -- (1,-1);
		\draw[thick, opacity =.2] (0,-.5) -- (.5,-1);
		\node at (2,-.5) (B) {1};
		\node at (1.5,-.5) (G) {1};
		\node at (1,0) (H) {1};
		\node at (.5,.5) (I) {1};
		\node at (0,1) (J) {1};
		\draw[->] (B) -- (G);
		\draw[->] (B) -- (H);
		\draw[->] (B) -- (I);
		\draw[->] (B) -- (J);
		\node at (1,-.5) (K) {1};
		\node at (.5,0) (L) {2};
		\node at (0,.5) (M) {3};
		\draw[->] (G) -- (K);
		\draw[->] (G) -- (L);
		\draw[->] (G) -- (M);
		\draw[->] (H) -- (K);
		\draw[->] (H) -- (L);
		\draw[->] (H) -- (M);
		\draw[->] (I) -- (L);
		\draw[->] (I) -- (M);
		\draw[->] (J) -- (M);
		\node at (.5,-.5) (N) {1};
		\node at (0,0) (O) {3};
		\draw[->] (K) -- (N);
		\draw[->] (K) -- (O);
		\draw[->] (L) -- (N);
		\draw[->] (L) -- (O);
		\draw[->] (M) -- (O);
		\node at (0,-.5) (P) {1};
		\draw[->] (N) -- (P);
		\draw[->] (O) -- (P);
		\end{tikzpicture}
	\end{center}
\end{definition}

Again, we can define $\partial_v\tilde{D}^{p,q}(A)$ for a $k$-module $A$ in the analogous way and obtain the following.

\begin{lemma}\label{twisted_boundary_representation}
The pair of functors
\[
\partial_v\tilde{D}^{p,q}: k\mbox{-mod} \lradjunction \tcplx: Z^v_{p,q-1}
\]
is an adjoint pair. The \emph{inclusion of vertical boundaries} $\partial_v\tilde{D}^{p,q}\mono \tilde{D}^{p,q}$ is the morphism representing the natural map $d_0\colon X_{p,q}\to Z^v_{p,q-1}(X)$.

\end{lemma}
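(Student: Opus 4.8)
The plan is to establish the two claims in turn, both by an application of the representability machinery already set up. For the adjunction, I would first recall from Definition \ref{twisted_disk} (and Lemma \ref{twisted_disk}'s preamble) that $\tilde D^{p,q}$ is the twisted complex freely generated by a single element $x_{p,q}$ in bidegree $(p,q)$, which is exactly the statement that $\tcplx(\tilde D^{p,q},X)=X_{p,q}$ naturally. The object $\partial_v\tilde D^{p,q}$ is, by Definition \ref{twisted_boundary}, the twisted complex freely generated by a single element $y_{p,q-1}$ in bidegree $(p,q-1)$ \emph{subject to the one extra relation} $d_0(y_{p,q-1})=0$. So I would argue that specifying a morphism $\partial_v\tilde D^{p,q}\to X$ of twisted complexes is the same as choosing the image of $y_{p,q-1}$, namely an element of $X_{p,q-1}$ that is killed by $d_0$, i.e. a vertical cycle; hence $\tcplx(\partial_v\tilde D^{p,q},X)\cong Z^v_{p,q-1}(X)$, naturally in $X$. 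Extending to coefficients in an arbitrary $k$-module $A$ by the degreewise tensoring $\partial_v\tilde D^{p,q}(A)$ then gives $\tcplx(\partial_v\tilde D^{p,q}(A),X)\cong \hom_k(A,Z^v_{p,q-1}(X))$, which is precisely the asserted adjunction $\partial_v\tilde D^{p,q}\dashv Z^v_{p,q-1}$. (One must check $Z^v_{p,q-1}$ is genuinely functorial on $\tcplx$, which is immediate since morphisms commute with $d_0$.)

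For the second sentence, I would identify the map $\partial_v\tilde D^{p,q}\mono\tilde D^{p,q}$ that sends the generator $y_{p,q-1}$ to $d_0(x_{p,q})\in\tilde D^{p,q}_{p,q-1}$. This is well-defined because $d_0(x_{p,q})$ is a vertical cycle in $\tilde D^{p,q}$ (as $d_0^2=0$), so it respects the defining relation of $\partial_v\tilde D^{p,q}$; and it is a monomorphism because, comparing the explicit bases in Definition \ref{twisted_boundary} with those in Definition \ref{twisted_disk} (or Remark \ref{basis_alternative}), the basis $\{d_{i_1}\cdots d_{i_n}(y_{p,q-1})\}$ of $\partial_v\tilde D^{p,q}$ maps to the linearly independent family $\{d_{i_1}\cdots d_{i_n}d_0(x_{p,q})\}$ inside $\tilde D^{p,q}$. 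Then, tracing through the two Yoneda identifications, a morphism $X\to\,?$ hitting this map corresponds under $\tcplx(\tilde D^{p,q},-)=(-)_{p,q}$ to an element $x\in X_{p,q}$ and under $\tcplx(\partial_v\tilde D^{p,q},-)=Z^v_{p,q-1}(-)$ to its image, which is exactly $d_0(x)\in Z^v_{p,q-1}(X)$; i.e. the natural transformation induced on representables by $\partial_v\tilde D^{p,q}\mono\tilde D^{p,q}$ is precisely $d_0\colon X_{p,q}\to Z^v_{p,q-1}(X)$. This is a direct analogue of the bicomplex statement in Lemma \ref{representation}.

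The only mildly delicate point — and the one I would treat most carefully — is the injectivity of $\partial_v\tilde D^{p,q}\mono\tilde D^{p,q}$, i.e. verifying that the rewriting algorithm of Definition \ref{twisted_disk} does not collapse any of the basis elements $d_{i_1}\cdots d_{i_n}(y_{p,q-1})$ when we reinterpret $y_{p,q-1}$ as $d_0(x_{p,q})$. But this is already done for us: the elements $d_{i_1}\cdots d_{i_n}d_0(x_{p,q})$ with all $i_j>0$ are exactly the second half of the displayed basis of $\tilde D^{p,q}_{p-s,q+s-n}$ in Definition \ref{twisted_disk}, hence linearly independent, and the map is the evident inclusion onto that summand. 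Everything else (well-definedness of both functors and of the comparison map, naturality) is a routine check with no obstacle, so I would simply remark that the verifications are straightforward and parallel to the bicomplex case.
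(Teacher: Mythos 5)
Your argument is correct and is essentially the reasoning the paper itself relies on (it states the lemma as an immediate consequence of $\partial_v\tilde D^{p,q}$ being freely generated by a single vertical cycle, and handles the injectivity of $y_{p,q-1}\mapsto d_0(x_{p,q})$ via the same basis comparison you give, in Remark \ref{twisted_boundary_properties}). No gaps; your careful treatment of the linear independence of the elements $d_{i_1}\cdots d_{i_n}d_0(x_{p,q})$ matches the paper's.
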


\begin{remark}\label{twisted_boundary_properties}
	On bidegree $(p,q-1)$, the inclusion of vertical boundaries is defined by $y_{p,q-1}\mapsto d_0(x_{p,q})$. Hence it is clearly injective since it maps bijectively the bases in Definition \ref{twisted_boundary} to the second factors of the unions defining the bases in Definition \ref{twisted_disk}. Moreover, this observation also proves that the cokernel of $\partial_v\tilde{D}^{p,q}\mono \tilde{D}^{p,q}$ is $\partial_v\tilde{D}^{p,q+1}$.
\end{remark}

\begin{corollary}\label{twisted_representation_map}
	We have the following natural isomorphisms for any twisted complex $X$, $p\geq 0$, and $q\in\mathbb Z$:
	\begin{align*}
		\tcplx(\partial_v\tilde{D}^{p,q}\mono \tilde{D}^{p,q},X)&=\cplx(S^{q-1}\mono D^{q},X_{p,*}).
	\end{align*}
\end{corollary}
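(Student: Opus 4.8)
The plan is to observe that both the source arrow $\tcplx(\partial_v\tilde D^{p,q}\mono\tilde D^{p,q},X)$ and the target arrow $\cplx(S^{q-1}\mono D^q,X_{p,*})$ are canonically identified, naturally in $X$, with one and the same map of $k$-modules, namely $d_0\colon X_{p,q}\to Z^v_{p,q-1}(X)$; the corollary then follows by transitivity.

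First I would treat the twisted side. The natural isomorphism $\tcplx(\tilde D^{p,q},X)\cong X_{p,q}$ recorded just before Lemma~\ref{twisted_disk_trivial}, together with the natural isomorphism $\tcplx(\partial_v\tilde D^{p,q},X)\cong Z^v_{p,q-1}(X)$ of Lemma~\ref{twisted_boundary_representation}, identify $\tcplx(\partial_v\tilde D^{p,q}\mono\tilde D^{p,q},X)$ with a map $X_{p,q}\to Z^v_{p,q-1}(X)$. By the last sentence of Lemma~\ref{twisted_boundary_representation}, the inclusion $\partial_v\tilde D^{p,q}\mono\tilde D^{p,q}$ is exactly the morphism representing the natural transformation $d_0$, so this induced map is $d_0\colon X_{p,q}\to Z^v_{p,q-1}(X)$.

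Next I would treat the chain-complex side. The row $X_{p,*}$ is a $\mathbb Z$-graded complex whose differential is $d_0$, and $Z_{q-1}(X_{p,*})=Z^v_{p,q-1}(X)$. The adjunctions in $\cplx$ recalled in Section~\ref{sec:complexes} give $\cplx(D^q,X_{p,*})\cong (X_{p,*})_q=X_{p,q}$ and $\cplx(S^{q-1},X_{p,*})\cong Z_{q-1}(X_{p,*})=Z^v_{p,q-1}(X)$. Since $S^{q-1}\mono D^q$ is the identity in degree $q-1$, applying $\cplx(-,X_{p,*})$ to it yields the map $X_{p,q}\to Z_{q-1}(X_{p,*})$ induced by the differential of $X_{p,*}$, which is again $d_0$.

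Combining the two previous paragraphs gives the stated natural isomorphism. I do not expect a genuine obstacle here: the work is limited to checking that the squares expressing the above identifications commute --- which is immediate from naturality of the representing isomorphisms and from the explicit description of $\partial_v\tilde D^{p,q}\mono\tilde D^{p,q}$ on the bottom degree ($y_{p,q-1}\mapsto d_0(x_{p,q})$) given in Remark~\ref{twisted_boundary_properties} --- and to noting that this description carries no sign, so that both arrows equal $d_0$ on the nose rather than $\pm d_0$.
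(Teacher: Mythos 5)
Your argument is correct and coincides with the paper's intended justification: the corollary is stated as an immediate consequence of the representability statements $\tcplx(\tilde{D}^{p,q},X)\cong X_{p,q}$ and $\tcplx(\partial_v\tilde{D}^{p,q},X)\cong Z^v_{p,q-1}(X)$, together with the fact (Lemma \ref{twisted_boundary_representation}) that the inclusion represents $d_0$, exactly as you argue. Your extra check that the chain-complex side $\cplx(S^{q-1}\mono D^{q},X_{p,*})$ is also identified with $d_0\colon X_{p,q}\to Z^v_{p,q-1}(X)$, with no sign discrepancy, is the only content of the proof and is carried out correctly.
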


We need to know that, analogously to our previous model categories, the vertical boundary of the disc is actually acyclic, which requires more work than proving it for the disc itself. 

\begin{lemma}\label{twisted_boundary_trivial}
	For $p>0$, $H_*(\tot(\partial_v\tilde{D}^{p,q}))=0$.
\end{lemma}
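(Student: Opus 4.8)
\emph{Proof plan.} The plan is to bypass $\tot$ and instead compute the $E^2$-term of the spectral sequence of Remark~\ref{twisted_properties} for $X=\partial_v\tilde D^{p,q}$. Since $\partial_v\tilde D^{p,q}$ is concentrated in horizontal degrees $0,\dots,p$, the filtration of $\tot(\partial_v\tilde D^{p,q})$ is finite in each total degree and the spectral sequence converges strongly; hence it suffices to show $H^h(H^v(\partial_v\tilde D^{p,q}))=0$, which then forces $H_*(\tot(\partial_v\tilde D^{p,q}))=0$.

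First I would pin down the vertical homology. Working with the free basis $\{d_{i_1}\cdots d_{i_n}(y_{p,q-1})\}_{i_1,\dots,i_n>0}$ from Definition~\ref{twisted_boundary}, the relations $\sum_{a+b=m}d_ad_b=0$ together with $d_0(y_{p,q-1})=0$ let one push every $d_0$ to the right until it annihilates $y_{p,q-1}$; the outcome is that $d_0$ sends a basis element to a signed sum (with sign $(-1)^k$) of the basis elements obtained by splitting one part $i_k\ge 2$ of the composition $(i_1,\dots,i_n)$ of $s=\sum_j i_j$ into two positive parts. As $d_0$ preserves the horizontal degree $p-s$, the vertical complex is the direct sum over $s$ of \emph{composition complexes} $C_s$, where $(C_s)_n$ is free on the compositions of $s$ into $n$ positive parts. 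Now $C_0$ and $C_1$ are each a single copy of $k$ (on $y_{p,q-1}$, resp.\ on $d_1(y_{p,q-1})$), while for $s\ge 2$, recording a composition by its set of interior partial sums identifies $C_s$ with the augmented simplicial cochain complex of the full $(s-2)$-simplex on the vertex set $\{1,\dots,s-1\}$ (splitting a part corresponds to inserting a vertex), which is acyclic; alternatively one checks directly that merging the last two parts whenever the last part equals $1$ defines a contracting homotopy. Hence $H^v(\partial_v\tilde D^{p,q})$ is $k$ in bidegrees $(p,q-1)$ and $(p-1,q-1)$ and vanishes elsewhere --- this is where the hypothesis $p>0$ is used, so that $(p-1,q-1)$ is an admissible bidegree.

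It remains to run the horizontal differential $d_1$ induced on $H^v$. A short computation with the same relations shows that $d_1$ sends $[y_{p,q-1}]$ to $[d_1(y_{p,q-1})]$, thereby identifying the two surviving copies of $k$, and that $d_1[d_1(y_{p,q-1})]=[d_1d_1(y_{p,q-1})]=0$ because $d_1d_1(y_{p,q-1})=-d_0(d_2(y_{p,q-1}))$ is a vertical boundary. So $H^v(\partial_v\tilde D^{p,q})$, equipped with its horizontal differential, is a two-term complex $k\to k$ with invertible differential, and therefore $H^h(H^v(\partial_v\tilde D^{p,q}))=0$; by the spectral sequence this gives $H_*(\tot(\partial_v\tilde D^{p,q}))=0$. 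The genuinely delicate part is the sign bookkeeping in the second paragraph: one has to set up the basis and the rewriting rule for $d_0$ so that the $(-1)^k$ matches the simplicial coboundary sign, which is what makes $d_0^2=0$ into the usual simplicial identity and the acyclicity of $C_s$ for $s\ge 2$ immediate. Everything else --- strong convergence, and the final computation with $d_1$ --- is routine given the combinatorics already developed for $\tilde D^{p,q}$.
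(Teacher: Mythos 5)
Your proposal is correct and follows essentially the same route as the paper: identify the vertical complex in each horizontal degree $p-s$, $s\geq 2$, with the coaugmented simplicial cochain complex of $\Delta^{s-2}$ (your ``composition complex'' with interior partial sums is exactly the paper's basis bijection), observe that $H^v$ reduces to $k$ in bidegrees $(p,q-1)$ and $(p-1,q-1)$ with $d_1$ an isomorphism between them, and conclude $H^h(H^v)=0$ and hence $H_*(\tot)=0$ by strong convergence of the spectral sequence. The only (harmless) addition is your alternative contracting homotopy by merging the last two parts.
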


\begin{proof}

	The vertical differential $d_0$ on $\partial_v\tilde{D}^{p,q}$ is given by
	\begin{align*}
	d_0d_{i_1}\cdots d_{i_n}(y_{p,q-1})&=
	\sum_{j=1}^n(-1)^j\sum_{i_{j,1}+i_{j,2}=i_j}d_{i_1}\cdots d_{i_{j-1}}d_{i_{j,1}}d_{i_{j,2}}d_{i_{j+1}}\cdots d_{i_n}(y_{p,q-1}),
	\end{align*}
	see Definition  \ref{twisted_disk} and recall that $d_0(y_{p,q-1})=0$.

	\bigskip
	We will see that, up to degree shift and change of sign in the differential, the chain complex $\partial_v\tilde{D}^{p,q}_{p-s,*}$, $s\geq 2$, is isomorphic to $C^*(\Delta^{s-2}, k)$, the coaugmented simplicial cochain complex of the indicated simplex with coefficients in our ground ring, which is contractible. 
	
	Recall that the augmented simplicial chain complex $C_*(\Delta^n,k)$, $n\geq 0$, is freely generated in each degree $-1\leq t\leq n$ by the strictly increasing sequences $[v_0,\dots,v_t]$ of length $n+1$ formed by integers $0\leq v_i\leq n$. The rank of $C_t(\Delta^n,k)$ is therefore $\binom{n+1}{t+1}$ for $-1\leq t\leq n$. The complex is zero elsewhere. Its differential is defined by
	\[d([v_0,\dots,v_t])=\sum_{i=0}^t(-1)^{i}[v_0,\dots,\widehat{v_i},\dots, v_t],\]
	where $\widehat{v_i}$ means $v_i$ removed. The cochain complex $C^*(\Delta^n,k)$ is the $k$-linear dual of $C_*(\Delta^n,k)$, hence it is freely generated by the dual basis elements $[v_0,\dots,v_t]^*$, whose differentials are
	\[d([v_0,\dots,v_t]^*)=\sum_{i=0}^{t+1}\sum_{v_{i-1}<u<v_{i}}(-1)^{i}[\dots,v_{i-1},u,v_{i},\dots]^*.\]
	Here, in the index of the second summation we understand that $v_{-1}=-1$ and $v_{t+1}=n+1$.
	
	As we have seen in Definition \ref{twisted_boundary}, the rank of $\partial_v\tilde{D}^{p,q}_{p-s,q-1+s-n}$ is $\binom{s-1}{n-1}$, the same as the rank of $C^{n-2}(\Delta^{s-2},k)$. Moreover, we have an isomorphism
	\[\partial_v\tilde{D}^{p,q}_{p-s,q-1+s-n}\cong C^{n-2}(\Delta^{s-2},k)\]
	given by the basis bijection
	\[d_{i_1}\cdots d_{i_n}(y_{p,q-1})\mapsto \bigg[ i_2-1, i_2+i_3-1, \cdots , \sum_{j=2}^ni_j-1\bigg]^*.\]
	This isomorphism is clearly compatible with differentials up to a $-1$ sign.
	
	Therefore, $$H^v_{p-s,*}(\partial_v\tilde{D}^{p,q})=0 \,\,\,\mbox{if}\,\,\, s\geq 2.$$ The remaining non-trivial part of $H^v(\partial_v\tilde{D}^{p,q})$ reduces to 
	\[
	H^v_{p,q-1}(\partial_v\tilde{D}^{p,q})=H^v_{p-1,q-1}(\partial_v\tilde{D}^{p,q})=k.
	\]
	 The differential $d_1$ on $H^v(\partial_v\tilde{D}^{p,q})$ is an isomorphism between these two modules, i.e.~$H^v(\partial_v\tilde{D}^{p,q})=\partial_vD^{p,q}$, hence $$H^h(H^v(\partial_v\tilde{D}^{p,q}))=0.$$
	  As this is the $E^2$-term of the spectral sequence in Remark \ref{twisted_properties}, this implies $$H_*(\tot(\partial_v\tilde{D}^{p,q}))=0$$ as claimed. 
\end{proof}

We note that Lemma \ref{twisted_disk_trivial} for $p>0$ could also have been derived from Lemma \ref{twisted_boundary_trivial} and Remark \ref{twisted_boundary_properties}. Furthermore, the isomorphism $$\partial_v\tilde{D}^{p,q}_{p-s,*} \cong C^*(\Delta^{s-2}, k),\quad s\geq 2,$$ in the previous proof is of course a great convenience, but we are not aware of a conceptual reason as to why simplicial cochains appear in this proof. 

We now consider the monoidal structure on twisted complexes, attributed by Meyer \cite{Meyer_1978} to Liulevicius \cite{liulevicius}.

\begin{definition}\label{monoidal_twisted_complex}
	The \emph{tensor product} $X\otimes Y$ of two twisted complexes $X$ and $Y$ is the twisted complex with the same underlying bigraded module as in Definition \ref{monoidal_bicomplex}, such that the differential on $\tot(X\otimes Y)=\tot(X)\otimes\tot(Y)$ is the standard tensor product differential. This is equivalent to saying that the maps $d_i$ on $X\otimes Y$ are given by
	\[d_i(x\otimes y)=d_i(x)\otimes y+(-1)^{\abs{x}}x\otimes d_i(y),\quad i\geq 0.\]
	
	This tensor product equips $\tcplx$ with a closed symmetric monoidal structure with obvious associativity and unit constraints. The unit and symmetry constraints are the same as in Definition \ref{monoidal_bicomplex}. The mapping objects are defined by the submodules
	\[\hom_{\tcplx}(X,Y)_{p,q}\subset \prod_{\substack{s\geq 0\\t\in\mathbb Z}}\hom_k(X_{s,t},Y_{s+p,t+q}),\qquad p\geq 0, \quad q\in\mathbb Z,\]
	formed by the elements $f$ such that, for $i>p$,
	\[d_if=(-1)^{\abs{f}}fd_i.\]
	The maps $d_i$ are defined by
	\[d_i(f)=d_if-(-1)^{\abs{f}}fd_i.\]
\end{definition}

The totalisation functor on twisted complexes is strong symmetric monoidal in the obvious naive way.

\bigskip
We will now construct the total model structure on $\tcplx$ by checking that the given weak equivalences, generating cofibrations, and generating trivial cofibrations satisfy \cite[Theorem 2.1.29]{hovey_model_1999}. 

\pagebreak
These are

\begin{align*}
\mathcal{W}& =\{ f \in \tcplx \,\,|\,\, \tot(f) \mbox{ is a quasi-isomorphism in $\cplx$} \}, \\
	I&=\{\partial_v\tilde{D}^{p,q}\mono \tilde{D}^{p,q}\}_{\substack{p\geq0\\q\in\mathbb Z}},\\
	J&=\{0\mono \tilde{D}^{0,q}\}_{q\in	\mathbb Z}\cup\{\partial_v\tilde{D}^{p,q}\mono \tilde{D}^{p,q}\}_{\substack{p>0\\q\in\mathbb Z}}.
	\end{align*}

\begin{theorem}\label{total_twisted_complexes}
	The category of twisted complexes $\tcplx$ can be equipped with a proper combinatorial abelian model category structure  such that:
	\begin{itemize}
	\item $f\colon X\to Y$ is a weak equivalence if $\tot(f)$ is a quasi-isomorphism in $\cplx$,
	\item  $f$ is a (trivial) fibration if it is pointwise surjective and $H^v_{p,q}(f)$ is an isomorphism for all $p>0$ (resp.~$p\geq 0$) and $q\in\mathbb{Z}$,
	\item $f\colon X\into Y$ is a cofibration in $\tcplx$ if and only if it is injective with cofibrant cokernel. Cofibrant implies degreewise projective.
	\end{itemize}
\end{theorem}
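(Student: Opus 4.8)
The plan is to follow the same blueprint used for Theorem \ref{total_bicomplexes}, namely Smith's recognition principle \cite[Theorem 2.1.19]{hovey_model_1999}, now applied to the sets $I$ and $J$ above. The class $\mathcal{W}$ is closed under retracts and two-out-of-three because the same is true of quasi-isomorphisms in $\cplx$ and $\tot$ is a functor; the set-theoretic smallness conditions are immediate since $\tcplx$ is locally finitely presentable and all domains and codomains of maps in $I$ and $J$ are finitely presentable. So the real content is the four lifting/closure conditions.

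First I would compute $I$-inj and $J$-inj. By Corollary \ref{twisted_representation_map}, a map $f\colon X\to Y$ lies in $I$-inj if and only if $f_{p,\ast}\colon X_{p,\ast}\to Y_{p,\ast}$ has the right lifting property with respect to $S^{q-1}\mono D^q$ for all $p\geq 0$ and $q$, i.e.\ each $f_{p,\ast}$ is a trivial fibration in $\cplx$: pointwise surjective and a quasi-isomorphism. Since $\tot(f)$ pointwise is a finite direct sum of the $f_{p,\ast}$ (in appropriate degrees) this immediately gives $I\text{-inj}\subseteq\mathcal{W}$; a cleaner argument is that each $f_{p,\ast}$ being a quasi-isomorphism forces $H^v_{p,q}(f)$ to be an isomorphism (indeed $Z^v$, $B^v$, $H^v$ fit in long exact sequences), hence by the spectral sequence of Remark \ref{twisted_properties} $\tot(f)$ is a quasi-isomorphism. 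For $J$-inj: the factor $\{0\mono\tilde D^{0,q}\}$ gives, via $\tcplx(\tilde D^{0,q},X)=X_{0,q}$, pointwise surjectivity of $f_{0,\ast}$, and the factor $\{\partial_v\tilde D^{p,q}\mono\tilde D^{p,q}\}_{p>0}$ gives that $f_{p,\ast}$ is a trivial fibration for $p>0$; so $J$-inj consists of pointwise surjections $f$ with $H^v_{p,q}(f)$ an isomorphism for $p>0$. This matches the claimed (trivial) fibrations. Clearly $I\text{-inj}\subseteq J\text{-inj}$, and the identity $I\text{-inj}=\mathcal{W}\cap J\text{-inj}$ follows by the same long-exact-sequence-plus-spectral-sequence argument as in Theorem \ref{total_bicomplexes}: a pointwise surjection $f$ is in $\mathcal{W}$ iff its kernel $K\to 0$ is, and if $K\to 0$ is $J$-injective then $H^v_{0,\ast}(K)=H_\ast(\tot K)$ by strong convergence of the spectral sequence (the $E^2$-page is concentrated in horizontal degree $0$), so $K\to 0$ is $I$-injective exactly when it is a weak equivalence.

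Next I would check $J\text{-cell}\subseteq\mathcal{W}\cap I\text{-cof}$. The inclusion $J\text{-cell}\subseteq I\text{-cof}$ holds because $J\text{-cof}\subseteq I\text{-cof}$ (as $I\text{-inj}\subseteq J\text{-inj}$). For the weak equivalence part, every domain and codomain of a map in $J$ is $\tot$-acyclic: $H_\ast(\tot(\tilde D^{0,q}))=0$ and $H_\ast(\tot(\tilde D^{p,q}))=0$ by Lemma \ref{twisted_disk_trivial}, and $H_\ast(\tot(\partial_v\tilde D^{p,q}))=0$ for $p>0$ by Lemma \ref{twisted_boundary_trivial}; hence each map in $J$ is a weak equivalence, and since $\tot$ preserves colimits and sends maps in $J$ to trivial cofibrations in $\cplx$ (injective, as the $\partial_v\tilde D^{p,q}\mono\tilde D^{p,q}$ are pointwise split by Remark \ref{twisted_boundary_properties}, with acyclic cokernel), any $J$-cell complex totalises to a trivial cofibration in $\cplx$, hence is a weak equivalence. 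This verifies all hypotheses of the recognition principle, so the model structure exists with weak equivalences $\mathcal{W}$, fibrations $J\text{-inj}$, trivial fibrations $I\text{-inj}$, and cofibrations the retracts of $I$-cell complexes.

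Finally I would identify cofibrations and establish the extra properties. A pushout along $\partial_v\tilde D^{p,q}\mono\tilde D^{p,q}$ is injective with cokernel $\partial_v\tilde D^{p,q+1}$ by Remark \ref{twisted_boundary_properties}, and a pushout along $0\mono\tilde D^{0,q}$ adds a free summand; transfinite composition preserves injectivity and pointwise freeness of cokernels, so $I$-cell complexes, and hence (by a retract argument as in \cite[Proposition 2.3.9]{hovey_model_1999}) all cofibrations, are injective with pointwise projective cokernel, and conversely any such map is a retract of an $I$-cell complex by the usual small-object/cokernel argument; in particular cofibrant objects are pointwise projective. The abelian structure in the sense of \cite{hovey_cotorsion_2007} follows, exactly as in Theorem \ref{total_bicomplexes} via \cite[Proposition 4.2]{hovey_cotorsion_2002}, from the characterisations of cofibrations as injections with cofibrant cokernel and of fibrations as pointwise surjections with fibrant kernel (the latter again by a long-exact-sequence argument applied to the kernel). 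Properness follows because $\tot$ preserves pushouts, pullbacks, fibrations and weak equivalences, sends $I$ to cofibrations in $\cplx$, and $\cplx$ is proper. I expect the main obstacle to be the bookkeeping behind the acyclicity input, i.e.\ Lemma \ref{twisted_boundary_trivial}, together with the careful identification of the pushout cokernels of the maps in $I$ as $\partial_v\tilde D^{p,q+1}$; everything else is a faithful transcription of the bicomplex argument.
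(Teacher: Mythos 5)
Your proposal follows exactly the route the paper takes: the paper's own proof of this theorem is a short deferral to the proof of Theorem \ref{total_bicomplexes}, citing Corollary \ref{twisted_representation_map} for the identification of $I$-inj and $J$-inj, the spectral sequence of Remark \ref{twisted_properties} for comparing $H^v$-isomorphisms with $(H_*\circ\tot)$-isomorphisms and for detecting surjective weak equivalences on kernels, Lemmas \ref{twisted_disk_trivial} and \ref{twisted_boundary_trivial} for the acyclicity of the (co)domains of $J$, and Remark \ref{twisted_boundary_properties} for the cokernel computations; your write-up is a faithful and essentially complete expansion of these steps.

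Two caveats. First, the parenthetical claim that $I\text{-inj}\subseteq\mathcal W$ because ``$\tot(f)$ is pointwise a finite direct sum of the $f_{p,*}$'' is not an argument, since the total differential mixes the summands; but you immediately replace it with the correct spectral-sequence argument, which is the one the paper uses. Second, the clause ``conversely any such map is a retract of an $I$-cell complex,'' if ``such map'' refers to injections with pointwise projective cokernel, is false: since $\tot$ is left Quillen, a cofibrant twisted complex has dg-projective totalisation, so a non-contractible acyclic unbounded complex of projectives (e.g.\ over $k=\mathbb Z/4$) placed in horizontal degree $0$ is pointwise projective but not cofibrant. The theorem only asserts that cofibrations are the injections with \emph{cofibrant} cokernel, which you (and the paper) obtain from the abelian model structure via \cite[Proposition 4.2]{hovey_cotorsion_2002}, so this overreach is harmless but should be deleted.
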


\begin{proof}
	This proof is essentially the same as that of Theorem \ref{total_bicomplexes}. 	
		The characterisation of (trivial) fibrations in the statement follows from Lemma \ref{twisted_representation_map} instead of Lemma \ref{representation}. (Trivial) fibrations and surjective weak equivalences can be detected by their kernels for exactly the same reason. 
		
A pushout along a generating cofibration adds copies of $k$ in certain degrees, see Remark \ref{twisted_boundary_properties}. Therefore, cofibrations are monomorphisms with pointwise projective cokernel. 
	
	Properties (4), (5), and (6) in \cite[Theorem 2.1.19]{hovey_model_1999} follow by the same arguments, using here the spectral sequence in Remark \ref{twisted_properties}, which is the twisted analog of that in Remark \ref{spectral_sequence}. 
	
	The argument for properness is literally the same. The fact that $\tot$ takes $I$ to cofibrations in $\cplx$ follows easily from Remark \ref{twisted_boundary_properties}.
	
\end{proof}

\begin{remark}
	As in the total model structure on bicomplexes, a twisted complex $X$ is fibrant in $\tcplx$ whenever its vertical homology is concentrated in horizontal degree $0$, and it is trivially fibrant if the vertical homology vanishes completely. 
\end{remark}

\begin{proposition}
The total model structure on $\tcplx$ is monoidal, has a cofibrant tensor unit, and satisfies the monoid axiom. 
\end{proposition}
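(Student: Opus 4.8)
The plan is to follow the route of Proposition~\ref{prop:monoidal} essentially verbatim. Since $\tcplx_{\tot}$ is an abelian model category by Theorem~\ref{total_twisted_complexes}, I would check the four hypotheses of \cite[Theorem~4.2]{hovey_cotorsion_2007}: (1) every cofibrant object is flat; (2) the tensor product of two cofibrant objects is cofibrant; (3) the tensor product of a cofibrant object with an acyclic cofibrant object is acyclic; (4) the unit is cofibrant. Condition (1) is immediate because cofibrant twisted complexes have projective underlying bigraded module by Theorem~\ref{total_twisted_complexes}. For (4), the tensor unit is $k$ in bidegree $(0,0)$, i.e.~$\partial_v\tilde{D}^{0,1}$, which by Remark~\ref{twisted_boundary_properties} is the cokernel of the generating cofibration $\partial_v\tilde{D}^{0,0}\mono\tilde{D}^{0,0}$, hence cofibrant. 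As in the bicomplex case, \cite[Corollary~4.2.5]{hovey_model_1999} together with the abelian reduction in \cite[Theorem~7.2]{hovey_cotorsion_2002} reduces (2) and (3) to the case where the cofibrant objects are cokernels of generating cofibrations and the acyclic cofibrant object is a cokernel of a generating trivial cofibration; by Remark~\ref{twisted_boundary_properties} these are the vertical boundaries $\partial_v\tilde{D}^{p,q}$, $p\geq0$, together with (for the trivial cofibrations) the twisted discs $\tilde{D}^{0,q}$.

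Next I would dispose of condition (3) and the easy instances of (2) at once. Because $\tot$ is strong monoidal and $\tot(\tilde{D}^{0,q})$ and $\tot(\partial_v\tilde{D}^{p,q})$ with $p>0$ are acyclic by Lemmas~\ref{twisted_disk_trivial} and~\ref{twisted_boundary_trivial}, every tensor product having one of these as a factor is $\tot$-acyclic, hence acyclic in $\tcplx_{\tot}$; this settles (3). It also means that for (2), such a tensor product is cofibrant if and only if it is trivially cofibrant, since in an abelian model category an object is trivially cofibrant exactly when it is cofibrant and acyclic. Moreover $\partial_v\tilde{D}^{0,q}\cong S^{0,q-1}$, so tensoring with it is a pure vertical shift, giving $S^{0,q-1}\otimes\partial_v\tilde{D}^{s,t}\cong\partial_v\tilde{D}^{s,t+q-1}$, $S^{0,q-1}\otimes\tilde{D}^{0,t}\cong\tilde{D}^{0,t+q-1}$, $S^{0,q-1}\otimes S^{0,t-1}\cong S^{0,q+t-2}$, all cofibrant.

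Thus everything comes down to showing that $\partial_v\tilde{D}^{p,q}\otimes\partial_v\tilde{D}^{s,t}$ is cofibrant for $p,s>0$, and this is the step I expect to be the main obstacle. In the bicomplex case Proposition~\ref{prop:monoidal} used the clean splitting $\partial_vD^{p,q}\otimes\partial_vD^{s,t}\cong\partial_vD^{p+s,q+t-1}\oplus\partial_vD^{p+s-1,q+t-1}$, but for twisted complexes the relevant combinatorics is that of a product of simplices (compare the simplicial-cochain identification of $\partial_v\tilde{D}^{p,q}$ in the proof of Lemma~\ref{twisted_boundary_trivial}), so no such formula is available. My plan here is: first observe that \emph{every} $\tilde{D}^{a,b}$ and $\partial_v\tilde{D}^{a,b}$ with $a\geq0$ is cofibrant — the vertical boundary is a cokernel of a generating cofibration, and the disc then follows from the short exact sequence $\partial_v\tilde{D}^{a,b}\mono\tilde{D}^{a,b}\onto\partial_v\tilde{D}^{a,b+1}$ of Remark~\ref{twisted_boundary_properties} together with closure of cofibrant objects under extensions in an abelian model category. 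Then I would equip $\partial_v\tilde{D}^{p,q}\otimes\partial_v\tilde{D}^{s,t}$ with a finite exhaustive filtration by sub-twisted-complexes — obtained, using the explicit bases of Definitions~\ref{twisted_disk} and~\ref{twisted_boundary}, by filtering according to the horizontal degree contributed by the first tensor factor, which is respected by all the $d_i$ since they do not raise horizontal degree — whose successive subquotients are degree shifts of the $\tilde{D}^{a,b}$ and $\partial_v\tilde{D}^{a,b}$ just shown to be cofibrant; closure under extensions then yields the claim. Verifying that the filtration stages really are sub-twisted-complexes and correctly identifying the subquotients is where the genuine work lies.

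Finally, for the monoid axiom \cite[Definition~3.3]{schwede_algebras_2000} I would argue exactly as for $\bcplx_{\tot}$ and $\bcplx_{\CE}$: $\tot$ is strong monoidal, preserves colimits, detects weak equivalences, and carries the generating trivial cofibrations $J$ to trivial cofibrations in $\cplx$ — indeed $\tot$ of $0\mono\tilde{D}^{0,q}$ is $0\mono D^q$, and $\tot$ of $\partial_v\tilde{D}^{p,q}\mono\tilde{D}^{p,q}$ with $p>0$ is an injection between $\tot$-acyclic complexes whose cokernel $\tot(\partial_v\tilde{D}^{p,q+1})$ has free underlying module, hence is a bounded acyclic complex of projectives and therefore contractible. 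Since $\cplx$ satisfies the monoid axiom, applying $\tot$ shows that any transfinite composition of pushouts of maps $j\otimes Y$ with $j\in J$ is a weak equivalence in $\tcplx_{\tot}$, which is precisely the monoid axiom; cofibrancy of the unit was already established.
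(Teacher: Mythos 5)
Your overall framework --- verifying the four hypotheses of \cite[Theorem 4.2]{hovey_cotorsion_2007}, reducing (2) and (3) to cokernels of generating (trivial) cofibrations, and deducing the monoid axiom from the behaviour of $\tot$ on $J$ --- agrees with the paper, and your handling of (1), (3), (4) and of the factors involving $\partial_v\tilde{D}^{0,q}$ is fine. The gap is exactly in the step you flag as the main obstacle, and it is not routine work that can be filled in: the filtration argument breaks. Filtering $\partial_v\tilde{D}^{p,q}\otimes\partial_v\tilde{D}^{s,t}$ by the horizontal degree of the first tensor factor, the subquotient $F_m/F_{m-1}$ is the tensor product of the \emph{column} $(\partial_v\tilde{D}^{p,q}_{m,*},d_0)$, placed in horizontal degree $m$, with $\partial_v\tilde{D}^{s,t}$; the top one is $k$ concentrated in bidegree $(p,q-1)$ tensored with $\partial_v\tilde{D}^{s,t}$. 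These are \emph{horizontal} shifts of the boundaries and discs, and for $m>0$ such a shift is not isomorphic to any $\partial_v\tilde{D}^{a,b}$ or $\tilde{D}^{a,b}$ (unlike the bicomplex case, where $\partial_vD^{s,t}$ occupies only two adjacent columns and its horizontal shifts are again of that form, which is what makes the splitting used in Proposition \ref{prop:monoidal} possible). Worse, such a shift need not be cofibrant. Take $B$ = $k$ in bidegrees $(2,-1)$ and $(1,-1)$ joined by $d_1$, with all other differentials zero (this is the top subquotient for $p=s=1$). Let $E$ be obtained from $B$ by adjoining a copy of $\tilde{D}^{0,0}$, i.e.\ generators $u$ in $(0,0)$ and $d_0(u)$ in $(0,-1)$, and declaring $d_2(z)=u$ and $d_1(d_1z)=-d_0(u)$ for the top generator $z$ of $B$; the relation $d_0d_2+d_1d_1+d_2d_0=0$ holds and $E$ is a twisted complex. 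The projection $E\onto B$ is pointwise surjective and an $H^v_{m,*}$-isomorphism for every $m\geq 0$, hence a trivial fibration by Theorem \ref{total_twisted_complexes}, but it has no section: any section must send $z$ to its unique preimage $z$ and then cannot commute with $d_2$, since $d_2(z)=u\neq 0$ in $E$ while $d_2(z)=0$ in $B$. So $0\to B$ fails the left lifting property against a trivial fibration, $B$ is not cofibrant, and closure of cofibrant objects under extensions gives you nothing for your filtration.

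The paper avoids the tensor side altogether (and explicitly remarks that the direct computation ``would be even more complicated''): it passes to the adjoint form of the pushout-product axiom and aims to show that $\hom_{\tcplx}(X,-)$ carries (trivial) fibrations to (trivial) fibrations for $X$ a cokernel of a generating (trivial) cofibration, which --- the model structure being abelian and $\hom_{\tcplx}(\partial_v\tilde{D}^{p,q},-)$ preserving surjections --- reduces to proving that $\hom_{\tcplx}(\partial_v\tilde{D}^{p,q},Z)$ is trivially fibrant for every fibrant $Z$. That computation is carried out column by column via the subobjects $\partial_v\tilde{D}^{p,q,s}\subset\partial_v\tilde{D}^{p,q}$ and their identification with (relative) simplicial cochain complexes $C^*(\Delta^{p-u-2},\Delta^{p-u-s-2},k)$, extending Lemma \ref{twisted_boundary_trivial}. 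If you wish to salvage a tensor-side proof you would have to exhibit an actual $I$-cell structure on $\partial_v\tilde{D}^{p,q}\otimes\partial_v\tilde{D}^{s,t}$ rather than a filtration by subobjects, since, as the example above shows, the graded pieces of any filtration compatible with the horizontal grading will be horizontally shifted objects that are generally not cofibrant in $\tcplx$.
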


\begin{proof}

As $\tcplx$ is an abelian model category, we can use
	 \cite[Theorem 4.2]{hovey_cotorsion_2007} to prove monoidality. Hypothesis (1) follows from the fact that cofibrant twisted complexes are pointwise projective. The tensor unit is cofibrant since it is the cokernel of the generating cofibration $\partial_v\tilde{D}^{0,0}\mono \tilde{D}^{0,0}$, which gives us (4). 
	 
	 \bigskip
	 With (2) and (3) we do not proceed in the same way as in the proof of Theorem \ref{total_bicomplexes}, since it would be even more complicated than what follows. By adjunction, the claims are equivalent to prove that, for any cofibrant twisted complex $X$ and any (trivial) fibration $f$, $\hom_{\tcplx}(X,f)$ is a (trivial) fibration of the mapping objects, and for any trivially cofibrant twisted complex $Y$ and any fibration $f$, $\hom_{\tcplx}(Y,f)$ is a trivial fibration, compare \cite[Lemma 4.2.2]{hovey_model_1999}. As remarked in the proof of Theorem \ref{total_bicomplexes}, it suffices to take $X$ to be the cokernel of a generating cofibration and to take $Y$ to be the cokernel of a generating trivial cofibration. This strategy has the advantage that we do not have to have an explicit characterisation of the cofibrations in $\tcplx$. 
	 
	 \bigskip	 
	 By Remark \ref{twisted_boundary_properties}, those cokernels are 
	 \[
	 X=\partial_v\tilde{D}^{0,q},  \,\,\,Y=\partial_v\tilde{D}^{p,q}, \,\,\,Y=\tilde{D}^{0,q},\qquad p>0,\quad q\in\mathbb Z.\]
	 We start with the two easy cases. 
	
	\bigskip
	The twisted complex $\partial_v\tilde{D}^{0,q}$ is $k$ concentrated in bidegree $(0,q-1)$, hence for any $A \in \tcplx$ we have natural isomorphisms
	\begin{align*}
	\hom_{\tcplx}(\partial_v\tilde{D}^{0,q},A)_{s,t}&=A_{s,t+q-1},\\
	H^v_{s,t}(\hom_{\tcplx}(\partial_v\tilde{D}^{0,q},A))&=H_{s,t+q-1}^v(A).
	\end{align*}
	The claim for $X=\partial_v\tilde{D}^{0,q}$ is an obvious consequence of these formulas as a map is a (trivial) fibration if and only if it is surjective and an isomorphism on $H^{v}_{p,q}$ for $p>0$ (resp.~$p \ge 0$). 
	
	\bigskip
	 Next, since $\tilde{D}^{0,q}$ is $D^q$ concentrated in horizontal degree $0$, we have natural isomorphisms of chain complexes, $s\geq 0$,
	\[\hom_{\tcplx}(\tilde{D}^{0,q}, A)_{s,*}=\hom_{\cplx}({D}^{q},A_{s,*}),\]
	the first having differential $d_0$. The object ${D}^{q}$ is trivially cofibrant in $\cplx$ and hence mapping out of it preserves (trivial) fibrations in $\cplx$. If $f\colon A \rightarrow B$ is a fibration in $\tcplx$ then $$f_{s,\ast}: A_{s,\ast} \longrightarrow B_{s,\ast}$$ is a fibration of chain complexes, which implies that $\hom_{\cplx}({D}^{q},f_{s,*})$ is a trivial fibration of complexes for all $s\geq 0$. Hence $\hom_{\tcplx}(\tilde{D}^{0,q}, A)$ is a trivial fibration in $\tcplx$.

\bigskip
	Let $p>0$. We now consider the most difficult case, $Y=\partial_v\tilde{D}^{p,q}$, which requires more calculations. Recall the basis of the underlying bigraded module of $\partial_v\tilde{D}^{p,q}$ in Definition \ref{twisted_boundary}. For each $s\geq 0$, we consider the bigraded submodule $$\partial_v\tilde{D}^{p,q,s}\subset \partial_v\tilde{D}^{p,q}$$ generated by the elements $y_{p,q-1}$ and $d_{i_1}\cdots d_{i_n}(y_{p,q-1})$ with $i_1\leq s$. The inclusion $\partial_v\tilde{D}^{p,q,s}\subset \partial_v\tilde{D}^{p,q}$ is compatible with the vertical differential $d_0$, see the formula at the beginning of the proof of Lemma \ref{twisted_boundary_trivial}. 
		
	Using this basis and the definition of the mapping object $\tcplx$ we see that, for all $Z \in \tcplx$, $s\geq 0$, and $t\in\mathbb Z$, the composite
	\[
	\hom_{\tcplx}(\partial_v\tilde{D}^{p,q},Z)_{s,t}\subset \prod_{\substack{u\geq 0\\w\in\mathbb Z}}\hom_k(\partial_v\tilde{D}^{p,q}_{u,w},Z_{u+s,w+t})
	\onto
	\prod_{\substack{u\geq 0\\w\in\mathbb Z}}\hom_k(\partial_v\tilde{D}^{p,q,s}_{u,w},Z_{u+s,w+t})
	\]
	is an isomorphism. In particular, $\hom_{\tcplx}(\partial_v\tilde{D}^{p,q},-)$ preserves surjections since $\partial_v\tilde{D}^{p,q,s}$ is pointwise free. 
	Our model structure is abelian, which means that (trivial) fibrations are exactly the surjections with (trivially) fibrant kernel.
	Hence, it suffices to prove that $\hom_{\tcplx}(\partial_v\tilde{D}^{p,q},Z)$ is trivially fibrant for any fibrant twisted complex $Z$.

	\bigskip
	A twisted complex is trivially fibrant if it has trivial vertical homology. For any $s\geq 0$, the previous isomorphism yields an identification of chain complexes
		\[\hom_{\tcplx}(\partial_v\tilde{D}^{p,q},Z)_{s,*}
		=
		\prod_{u\geq 0}
		\hom_{\cplx}(\partial_v\tilde{D}^{p,q,s}_{u,*},Z_{u+s,*}),
		\]
	the first having differential $d_0$ again. 
	The twisted complex $\partial_v\tilde{D}^{p,q}$ is concentrated in horizontal degrees $\leq p$. Hence the previous product is actually indexed by $0\leq u\leq p$.
	We must prove that each factor of the product is acyclic. We distinguish the possible cases.

	\pagebreak
	\fbox{$s=0$}
	\medskip
	
	If $s=0$, $\partial_v\tilde{D}^{p,q,0}_{p,*}$ is $k$ concentrated in vertical degree $q-1$ and $\partial_v\tilde{D}^{p,q,0}_{u,*}=0$ for $u\neq p$. In the latter case, $$\hom_{\cplx}(\partial_v\tilde{D}^{p,q,0}_{u,*},Z_{u,*})=0,$$ and in the former,
	\[H_t(\hom_{\cplx}(\partial_v\tilde{D}^{p,q,0}_{p,*},Z_{p,*}))
	=H_{t+q-1}(Z_{p,*})=H^v_{p,t+q-1}(Z)=0.
	\]
	This is indeed zero since $p>0$ and $Z$ is a fibrant twisted complex.

	\bigskip
		\fbox{$s>0,\,\, u=p, p-1$}
	\medskip
		
	Both $\partial_v\tilde{D}^{p,q,s}_{p,*}$ and $\partial_v\tilde{D}^{p,q,s}_{p-1,*}$ are $k$ concentrated in vertical degree $q-1$, son we can check as right above that $\hom_{\cplx}(\partial_v\tilde{D}^{p,q,s}_{u,*},Z_{u+s,*})$ is acyclic. 

	\bigskip
	\fbox{$s>0,\,\, p-s \leq u \leq p-2$}
	\medskip
	
	The restriction $i_1\leq s$ is empty in horizontal degrees $\geq p-s$ in $\partial_v\tilde{D}^{p,q}$ because in bidegree $(p-j, q-1+j-n)$ it is generated by $d_{i_1} \cdots d_{i_n} (y_{p,q-1})$ with $i_1, \cdots i_n >0$ and $i_1 + \cdots + i_n = j$.
	Hence,  
	\[
	\partial_v\tilde{D}^{p,q,s}_{u,*}=\partial_v\tilde{D}^{p,q}_{u,*} \,\,\,\mbox{for}\,\,\, u\geq p-s.
	\] 
	In the proof of Lemma \ref{twisted_boundary_trivial}, for $u\leq p-2$ we have established an identification between $\partial_v\tilde{D}^{p,q}_{u,*}$ and the coaugmented cochain complex of a simplex $C^*(\Delta^{p-u-2}, k)$, which is trivially cofibrant in $\cplx$. Therefore $\hom_{\cplx}(\partial_v\tilde{D}^{p,q,s}_{u,*},Z_{u+s,*})$ is also acyclic for $p-s\leq u\leq p-2$.

	\bigskip
	\fbox{$s>0,\,\, u=p-s-1$}
	\medskip
		
	This is the first case where the condition $i_1\leq s$ is meaningful, but it only discards $d_{s+1}(y_{p,q-1})$, the topmost non-trivial free generator of the trivially cofibrant complex $\partial_v\tilde{D}^{p,q}_{p-s-1,*}$. Therefore, $\partial_v\tilde{D}^{p,q,s}_{p-s-1,*}$ contains $k$ concentrated in vertical degree $q+s-2$ as a strong deformation retract. Hence,
	\[
	H_t(\hom_{\cplx}(\partial_v\tilde{D}^{p,q,s}_{p-s-1,*},Z_{p-1,*}))=H_{t+q+s-2}(Z_{p-1,*})=H^v_{p-1,t+q+s-2}(Z)=0,
	\] 
	since $Z$ is fibrant and $u\geq 0$, so $p\geq s+1>1$, i.e.~$p-1>0$.

	\bigskip
	\fbox{$s>0,\,\, 0 \leq u < p-s-1$}
	\medskip
	
	Using the explicit identification of $\partial_v\tilde{D}^{p,q}_{u,*}$ with $C^*(\Delta^{p-u-2}, k)$ in the proof of Lemma \ref{twisted_boundary_trivial}, we can identify the vertical subcomplexes 
	\[
	\partial_v\tilde{D}^{p,q,s}_{u,*} \cong C^*(\Delta^{p-u-2}, \Delta^{p-u-s-2}, k).
	\]
	This follows from the fact that, by the constraint $\sum_{j=1}^ni_j=p-u$ for the elements $d_{i_1} \cdots d_{i_n} (y_{p,q-1})$ of the basis of $\partial_v\tilde{D}^{p,q,s}_{u,*}$, the condition $i_1\leq s$ is equivalent to
	\[\sum_{j=2}^ni_2-1\geq p-u-s-1.\]
	Hence the basis elements of $\partial_v\tilde{D}^{p,q,s}_{u,*}$ correspond to the duals of the simplices in $\Delta^{p-u-2}$ which are not in $\Delta^{p-u-s-2}$. The latter simplex makes sense by the upper bound of $u$.
	
	This shows that $\partial_v\tilde{D}^{p,q,s}_{u,*}$ is trivially cofibrant under the current hypotheses, so $\hom_{\cplx}(\partial_v\tilde{D}^{p,q,s}_{u,*},Z_{u+s,*})$ is also acyclic. Therefore, we have finally proved that $\tcplx$ is monoidal.
	
	\bigskip
	Lastly, the functor $\tot$ takes generating trivial cofibrations in $\tcplx$ to cofibrations in $\cplx$, compare Remark \ref{twisted_boundary_properties}, which are trivial by Lemmas \ref{twisted_disk_trivial} and \ref{twisted_boundary_trivial}. Hence the monoid axiom for $\tcplx$ follows from the validity of the monoid axiom in $\cplx$.

\end{proof}

\begin{proposition}\label{quillen_equivalence_twisted_complexes}
	The inclusion of chain complexes as twisted complexes concentrated in horizontal degree $0$ is the left adjoint of a strong symmetric monoidal Quillen equivalence,
	\[\cplx\rightleftarrows\tcplx.\]
\end{proposition}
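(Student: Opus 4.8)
The plan is to transcribe the proof of Proposition \ref{quillen_equivalence_bicomplexes_complexes} almost verbatim. Write $i\colon\cplx\to\tcplx$ for the inclusion, sending a chain complex $C$ to the twisted complex concentrated in horizontal degree $0$ with $d_0$ equal to the differential of $C$ and $d_j=0$ for $j\geq 1$; the relations $\sum_{i+j=n}d_id_j=0$ then reduce to $d_0^2=0$, so this is well defined. Its right adjoint is $X\mapsto X_{0,*}$, viewed as a chain complex with differential $d_0$ (the unique $d_i$ preserving horizontal degree $0$), and the adjunction isomorphism $\tcplx(iC,X)\cong\cplx(C,X_{0,*})$ is immediate, since a morphism out of $iC$ must land in $X_{0,*}$ and commutation with the $d_j$ for $j\geq 1$ is automatic in the relevant range. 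I would first record that $i$ is strong symmetric monoidal: since $iC$ is concentrated in horizontal degree $0$, the bigraded module underlying $iC\otimes iD$ is concentrated there and equals $C\otimes D$, its only surviving differential $d_0$ is the tensor-product differential, the unit $S^0$ goes to $k$ in bidegree $(0,0)$, and the Koszul sign on the total degree agrees with the one on the vertical degree for objects concentrated in horizontal degree $0$.

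To see that $i$ is left Quillen I would compute the images of generators. Both $\tilde{D}^{0,q}$ and $\partial_v\tilde{D}^{0,q}$ are concentrated in horizontal degree $0$: the former is $k$ in bidegrees $(0,q)$ and $(0,q-1)$ with $d_0=\mathrm{id}$, the latter is $k$ in bidegree $(0,q-1)$, and by Remark \ref{twisted_boundary_properties} the inclusion $\partial_v\tilde{D}^{0,q}\mono\tilde{D}^{0,q}$ carries the generator to $d_0(x_{0,q})$. Hence $i(D^q)=\tilde{D}^{0,q}$ and $i(S^{q-1})=\partial_v\tilde{D}^{0,q}$, so $i$ sends the generating cofibration $S^{q-1}\mono D^q$ of $\cplx$ to $\partial_v\tilde{D}^{0,q}\mono\tilde{D}^{0,q}$, the $p=0$ member of $I$, and the generating trivial cofibration $0\mono D^q$ to $0\mono\tilde{D}^{0,q}$, which lies in $J$. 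Since $i$ preserves colimits, it therefore takes $I_{\cplx}$-cofibrations to $I$-cofibrations and $J_{\cplx}$-cofibrations to $J$-cofibrations, i.e.~it is a left Quillen functor.

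Finally, for the Quillen equivalence, as in Proposition \ref{quillen_equivalence_bicomplexes_complexes} it suffices to show that for an arbitrary chain complex $C$ and a fibrant twisted complex $Y$, a map $f\colon iC\to Y$ is a weak equivalence if and only if its adjoint $f_{0,*}\colon C\to Y_{0,*}$ is a quasi-isomorphism. I would use the identifications $\tot(iC)=C=(iC)_{0,*}$, so that $\tot(f)$ agrees on the source with $f_{0,*}$, together with the fact that fibrancy of $Y$ means $H^v_{p,*}(Y)=0$ for $p>0$, whence the spectral sequence of Remark \ref{twisted_properties} collapses onto the column $p=0$ and strong convergence yields a natural isomorphism $H_n(\tot(Y))\cong H^v_{0,n}(Y)=H_n(Y_{0,*})$, the computation already carried out inside the proof of Theorem \ref{total_twisted_complexes}. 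Chasing the resulting naturality square then completes the proof. There is no real obstacle here; the only step that is not pure bookkeeping is this collapse of the spectral sequence for fibrant $Y$, and that too is immediate from strong convergence. If anything, the twisted case is slightly cleaner than the bicomplex one, since the single family defining $I$ already contains the $p=0$ discs onto which $i$ sends the generators of $\cplx$.
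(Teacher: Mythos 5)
Your proposal is correct and follows essentially the same route as the paper, which simply defers to the argument for Proposition \ref{quillen_equivalence_bicomplexes_complexes}: identify the right adjoint as $X\mapsto X_{0,*}$, check the Quillen condition on generators (the paper's twisted proof phrases this dually, via the right adjoint preserving (trivial) fibrations, but your computation $i(S^{q-1}\mono D^q)=(\partial_v\tilde{D}^{0,q}\mono\tilde{D}^{0,q})$ and $i(0\mono D^q)=(0\mono\tilde{D}^{0,q})$ is exactly the bicomplex-case argument transported to $\tcplx$), and then use the collapse of the spectral sequence of Remark \ref{twisted_properties} for fibrant $Y$ to identify $H_*(\tot(Y))$ with $H_*(Y_{0,*})$. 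No gaps.
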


\begin{proof}
	As in the proof of Proposition \ref{quillen_equivalence_bicomplexes_complexes}, the left adjoint obviously preserves the tensor product and the tensor unit, and the right adjoint is $$\tcplx\to\cplx\colon X\mapsto X_{0,*}.$$ Clearly, this right adjoint preserves (trivial) fibrations. This shows that the adjoint pair is a Quillen pair. 
	
	The same argument as in Proposition \ref{quillen_equivalence_bicomplexes_complexes} shows that it is a Quillen equivalence. Here we should use the spectral sequence in Remark \ref{twisted_properties} instead of Remark \ref{spectral_sequence}.
\end{proof}

\begin{corollary}
	The adjoint pair in Remark \ref{twisted_properties} defines a strong symmetric monoidal Quillen equivalence \[\tcplx\rightleftarrows\bcplx_{\tot}.\]
\end{corollary}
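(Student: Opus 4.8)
The adjunction under consideration has as left adjoint the reflection $L\colon\tcplx\to\bcplx$, $X\mapsto X/(\sum_{i\geq 2}d_i(X))$, and as right adjoint the fully faithful inclusion $R\colon\bcplx\to\tcplx$ sending a bicomplex to the twisted complex with $d_i=0$ for $i\geq 2$. The plan is to reduce everything to the two Quillen equivalences already established in Propositions~\ref{quillen_equivalence_bicomplexes_complexes} and \ref{quillen_equivalence_twisted_complexes} by a two-out-of-three argument, after observing that $R$ is strictly monoidal.

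First I would check that $(L,R)$ is a Quillen pair by verifying that $R$ preserves fibrations and trivial fibrations. By Theorems~\ref{total_bicomplexes} and \ref{total_twisted_complexes}, in both $\bcplx_{\tot}$ and $\tcplx$ a (trivial) fibration is precisely a pointwise surjection $f$ such that $H^v_{p,q}(f)$ is an isomorphism for all $p>0$ (respectively for all $p\geq 0$); since $R$ alters neither the underlying map of bigraded modules nor the vertical differential $d_0=d_v$, it preserves these conditions, so $L$ is left Quillen. Next I would identify the composite with the earlier equivalences. Write $F\colon\cplx\to\bcplx_{\tot}$ and $G\colon\cplx\to\tcplx$ for the left adjoints of Propositions~\ref{quillen_equivalence_bicomplexes_complexes} and \ref{quillen_equivalence_twisted_complexes}, both of which place a chain complex in horizontal degree $0$. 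A complex sitting in horizontal degree $0$ has $d_i=0$ for every $i\geq 1$ for degree reasons, so $\sum_{i\geq 2}d_i$ acts trivially on $G(C)$ and $L(G(C))=F(C)$; the right adjoints compose correctly as well, so $L\circ G=F$ as Quillen adjunctions. Since $F$ and $G$ are Quillen equivalences and $L$ is left Quillen, the two-out-of-three property for Quillen equivalences (see \cite{hovey_model_1999}) shows that $L$ is a Quillen equivalence. This also yields the last assertion, that composing the equivalence $\cplx\rightleftarrows\tcplx$ with $\tcplx\rightleftarrows\bcplx_{\tot}$ recovers $\cplx\rightleftarrows\bcplx_{\tot}$.

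For the monoidal part I would note that $R$ is strictly symmetric monoidal: by Definitions~\ref{monoidal_bicomplex} and \ref{monoidal_twisted_complex} the two tensor products are given by literally the same formulas, the tensor product of bicomplexes again has $d_i=0$ for $i\geq 2$, the unit and symmetry constraints coincide, and $\tot=\tot_{\cplx}\circ R$. Thus $(L,R)$ is a strong symmetric monoidal Quillen equivalence. The one point that needs care — and the only thing resembling an obstacle — is the \emph{direction} in which the monoidal structure is carried: it is the right adjoint $R$ that is strong symmetric monoidal (the natural analogue of the fully faithful left-adjoint inclusions of Propositions~\ref{quillen_equivalence_bicomplexes_complexes}–\ref{quillen_equivalence_twisted_complexes}), while the left adjoint $L$ is merely oplax monoidal; the comonoidal comparison $L(X\otimes Y)\to L(X)\otimes L(Y)$ is not an isomorphism in general (already for $X=Y=\tilde{D}^{2,0}$ it has a nonzero kernel in bidegree $(2,1)$), though it is automatically a weak equivalence on cofibrant objects since $X\to RLX$ is a weak equivalence for cofibrant $X$. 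Apart from this bookkeeping, the proof is routine.
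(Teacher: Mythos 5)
Your argument is correct and is essentially the paper's own proof: the right adjoint, being the full inclusion $\bcplx\subset\tcplx$, visibly preserves the (trivial) fibrations as characterised in Theorems \ref{total_bicomplexes} and \ref{total_twisted_complexes}, so the pair is a Quillen pair, and since its composite with the Quillen equivalence $\cplx\rightleftarrows\tcplx$ of Proposition \ref{quillen_equivalence_twisted_complexes} is the Quillen equivalence $\cplx\rightleftarrows\bcplx_{\tot}$ of Proposition \ref{quillen_equivalence_bicomplexes_complexes}, the two-out-of-three property for Quillen equivalences finishes the job. Your discussion of the monoidal structure goes beyond the paper's proof, which is silent on this point, and your observation that here it is the \emph{right} adjoint that is strictly symmetric monoidal while the reflection $X\mapsto X/(\sum_{i\geq 2}d_i(X))$ is only oplax (your bidegree $(2,1)$ computation for $\tilde{D}^{2,0}\otimes\tilde{D}^{2,0}$ is a valid witness) is a worthwhile caveat about how the word \emph{strong} must be read in the statement.
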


\begin{proof}
	The right adjoint, which is the full inclusion $\bcplx\subset\tcplx$, obviously preserves (trivial) fibrations, so the adjoint pair in the statement is a Quillen pair. This Quillen pair and the Quillen equivalence in Proposition \ref{quillen_equivalence_twisted_complexes}
	\[\cplx\rightleftarrows\tcplx\rightleftarrows\bcplx_{\tot}\]
	compose to the Quillen equivalence in Proposition \ref{quillen_equivalence_bicomplexes_complexes}. Hence the corollary follows from the $2$-out-of-$3$ property for Quillen equivalences \cite[Corollary 1.3.15]{hovey_model_1999}.
\end{proof}


\providecommand{\bysame}{\leavevmode\hbox to3em{\hrulefill}\thinspace}
\providecommand{\MR}{\relax\ifhmode\unskip\space\fi MR }
\providecommand{\MRhref}[2]{%
  \href{http://www.ams.org/mathscinet-getitem?mr=#1}{#2}
}
\providecommand{\href}[2]{#2}


\begin{thebibliography}{CESLW17}

\bibitem[Bro67]{Brown_1967}
R.~Brown, \emph{The twisted {E}ilenberg-{Z}ilber theorem}, p.~33?37, 1967.

\bibitem[CE56]{cartan_homological_1956}
H.~Cartan and S.~Eilenberg, \emph{Homological algebra}, {Princeton University
  Press, Princeton, N. J.}, 1956. \MR{0077480}

\bibitem[CESLW17]{cirici-egas-livernet-whitehouse}
J.~Cirici, D.~Egas~Santander, M.~Livernet, and S.~Whitehouse, \emph{Derived
  {$A_\infty$}-algebras and their homotopies}, arXiv:1609.08077 (2017), 1--47.

\bibitem[GM03]{Gelfand_Manin_2003}
S.~I. Gelfand and Y.~I. Manin, \emph{Methods of homological algebra}, second
  ed., Springer Monographs in Mathematics, Springer-Verlag, 2003.

\bibitem[Hir03]{Hirschhorn}
P.S. Hirschhorn, \emph{Model categories and their localizations}, Mathematical
  Surveys and Monographs, vol.~99, American Mathematical Society, Providence,
  RI, 2003. \MR{MR1944041 (2003j:18018)}

\bibitem[Hov99]{hovey_model_1999}
M.~Hovey, \emph{Model categories}, Mathematical Surveys and Monographs,
  vol.~63, {American Mathematical Society}, Providence, RI, 1999.

\bibitem[Hov02]{hovey_cotorsion_2002}
\bysame, \emph{Cotorsion pairs, model category structures, and representation
  theory}, Math. Z. \textbf{241} (2002), no.~3, 553--592. \MR{1938704}

\bibitem[Hov07]{hovey_cotorsion_2007}
\bysame, \emph{Cotorsion pairs and model categories}, Interactions between
  homotopy theory and algebra, Contemp. Math., vol. 436, {Amer. Math. Soc.,
  Providence, RI}, 2007, pp.~277--296. \MR{2355778}

\bibitem[Kad80]{Kadeishvili_1980}
T.~V. Kadeishvili, \emph{On the theory of homology of fiber spaces}, Akademiya
  Nauk SSSR i Moskovskoe Matematicheskoe Obshchestvo. Uspekhi Matematicheskikh
  Nauk \textbf{35} (1980), no.~3(213), 183?188, International Topology
  Conference (Moscow State Univ., Moscow, 1979).

\bibitem[Liu67]{liulevicius}
A.~Liulevicius, \emph{Multicomplexes and a general change of rings theorem},
  Mimeographed notes, Univ. of Chicago., 1967.

\bibitem[LRW13]{livernet-roitzheim-whitehouse}
M.~Livernet, C.~Roitzheim, and S.~Whitehouse, \emph{Derived
  {$A_\infty$}-algebras in an operadic context}, Algebr. Geom. Topol.
  \textbf{13} (2013), no.~1, 409--440. \MR{3031646}

\bibitem[McC01]{McCleary}
J.~McCleary, \emph{A user's guide to spectral sequences}, second ed., Cambridge
  Studies in Advanced Mathematics, vol.~58, Cambridge University Press,
  Cambridge, 2001. \MR{MR1793722 (2002c:55027)}

\bibitem[Mey78]{Meyer_1978}
J.-P. Meyer, \emph{Acyclic models for multicomplexes}, Duke Mathematical
  Journal \textbf{45} (1978), no.~1, 67?85.

\bibitem[Sag10]{sagave_dg-algebras_2010}
S.~Sagave, \emph{{DG}-algebras and derived ${A}_\infty$-algebras}, J. Reine
  Angew. Math. \textbf{639} (2010), 73?105. \MR{2608191}

\bibitem[SS00]{schwede_algebras_2000}
S.~Schwede and B.~Shipley, \emph{Algebras and modules in monoidal model
  categories}, Proc. London Math. Soc. (3) \textbf{80} (2000), no.~2, 491--511.

\bibitem[Wal61]{Wall_1961}
C.~T.~C. Wall, \emph{Resolutions for extensions of groups}, Proc. Cambridge
  Philos. Soc. \textbf{57} (1961), 251?255.

\bibitem[Wei94]{Weibel_1994}
C.~A. Weibel, \emph{An introduction to homological algebra}, Cambridge
  Studies in Advanced Mathematics, vol.~38, Cambridge University Press,
  Cambridge, 1994.

\end{thebibliography}
\end{document}